\newtheorem{theorem}{Theorem}[section]
\newtheorem{definition}{Definition}[section]
\newtheorem{lemma}[theorem]{Lemma}
\theoremstyle{remark}
\newtheorem*{remark}{Remark}
\newtheorem*{example}{Example}
\title{Logarithmic jets and the chiral de Rham complex of a pair}
\author{Emile Bouaziz}
\address{Academia Sinica, Taipei}
\email{emile.g.bouaziz@gmail.com}
\begin{document} \maketitle\begin{abstract} To a smooth variety $X$ with simple normal crossings divisor $D$, we associate a sheaf of vertex algebras on $X$, denoted $\Omega^{ch}_{X}(\operatorname{log}D)$, whose conformal weight $0$ subspace is the algebra $\Omega_{X}(\operatorname{log}D)$ of forms with log poles along $D$.
We prove various basic structural results about $\Omega^{ch}_{X}(\operatorname{log}D)$. In particular, if $X^{*}=X\setminus D$ has a volume form then we show that $\Omega^{ch}_{X}(\operatorname{log}D)$ admits a topological structure of rank $d=\operatorname{dim}(X)$, which is enhanced to an extended topological structure if $D\sim -K_{X}$ is in fact anticanonical.
In this latter case we also show that the resulting $(q,y)$ character $\operatorname{Ell}(X,D)(q,y)$ is a section of the line bundle $\Theta^{\otimes d}$ on the elliptic curve $E=\mathbf{C}^{*}/q^{\mathbf{Z}}$. 
We further show how $\Omega^{ch}_{X}(\operatorname{log}D)$ can be understood in terms of a simple birational modification of the space of jets into $X$.\end{abstract}

\section{Introduction and statement of results} Our interest is in a sheaf of vertex algebras on a smooth variety $X$, depending on the data of a simple normal crossings divisor $D$ in $X$. In this introduction we sketch some motivation for our construction, and recall in particular the well known story when the divisor is taken to be empty. For a discussion of the precise geometric context we work in see \ref{context} below.

\subsection{The classical story} In a celebrated paper of Malikov, Schechtman and Vaintrob, \cite{MSV}, the authors associate to a smooth $\mathbf{C}$-variety $X$ a sheaf of vertex algebras on $X$, formally locally equivalent to a tensor product of $d:=\operatorname{dim}(X)$ copies of the $bc\beta\gamma$-system. The resulting sheaf is denoted $\Omega^{ch}_{X}$ and referred to as the \emph{chiral de Rham complex}, or the \emph{curved} $bc\beta\gamma$ system on $X$. It is by now well studied. A physical interpretation of this object in terms of sigma-models is given by Kapustin in \cite{Kap}. Further, a construction in the Beilinson-Drinfeld language of factorisation algebras (\emph{cf.} \cite{Bei}) is given by Kapranov and Vasserot in \cite{KapVass}, although we remark that this treatment does not obviously yield the topological structure. The authors of \cite{MSV} further show that if $X$ is Calabi-Yau then $\Omega^{ch}_{X}$ admits a large super-algebra of symmetries. Namely, there are well defined global sections of $\Omega^{ch}_{X}$ generating a copy of the \emph{topological vertex algebra} at rank $d$, a certain \emph{topological twist} of the $\mathcal{N}=2$ superconformal algebra.

\label{geom}\subsection{The main geometric idea} A commutative cartoon of our construction is perhaps instructive, it will be rendered less cartoonish in section 5. The main idea consists in birationally modifying the free commutative vertex algebra generated by $X$, namely the space $JX$ of maps $$\Delta:=\operatorname{spec}\mathbf{C}[[z]]\rightarrow X,$$ in a manner specified by the divisor $D$. By a birational modification we mean only that we define a variant of $JX$, functorial in the pair $(X,D)$, which agrees with $JX$ over the open piece $X\setminus D$.  Informally, we modify this by considering only maps $\varphi$ endowed with a certain decoration when $\varphi(0)$ lies in the boundary $D$. 

More precisely we will consider the space $J_{\operatorname{log}D}(X)$ formed from $X$ by universally adjoining a vector field tangent to $D$, much as $JX$ can be thought of as the result of universally adjoining a vector field to $X$, which on $JX$ corresponds to the infinitesimal translation on $\Delta$, as explained in 2.3.2. of \cite{Bei}. Basic properties of $J_{\operatorname{log}D}(X)$ are easily described in terms of the pair $(X,D)$, for example if $D$ is smooth then the associated variety in the sense of \cite{Ara} is described as  $$\operatorname{AssVar}(J_{\operatorname{log}D}X)=X\cup_{D} N_{D/X},$$ with $\mathbf{C}^{*}$ action contracting the normal fibre directions. There is a natural divisor corresponding to $D$, which is preserved by our vector field, so that the resulting structure can be thought of as a \emph{log commutative vertex algebra}. We will eventually see that taking log de Rham forms along this will produce a Lagrangian subalgebra of our main object of study $\Omega^{ch}_{X}(\operatorname{log}D)$.

\subsection{Motivation coming from $\chi_{y}$ genus} We will sketch now another motivating problem, before explaining precisely how we will fail to solve it. If $X$ is compact, taking the trace on $H^{*}(X,\Omega^{ch}_{X})$ of the operators $L_{0}$ and $J_{0}$, graded by variables $q$ and $y$,  produces the \emph{elliptic genus} of $X$, $\operatorname{Ell}_{X}(q,y)\in\mathbf{Z}[y,y^{-1}][[q]]$. It was shown in \cite{BorLib} that for $q=\exp(2\pi i\tau)$, $y=\exp(2\pi iz)$ and $X$ Calabi-Yau, $\operatorname{Ell}_{X}(\tau,z)$ is a \emph{Jacobi form}  with index $\frac{d}{2}$ and weight $0$. It is shown in \cite{Libg} that in general $\operatorname{Ell}_{X}(\tau,z)$ is only a \emph{quasi Jacobi form.} 

Note that here it is crucial that $X$ is compact. In order to see what might be done for non-compact $X^{*}$, a first approximation is suggested by looking at the conformal weight $0$ subspace, which is simply the de Rham complex of $X^{*}$. The resulting trace is then just the $\chi_{y}$ genus, which can be defined for non-compact $X^{*}$, thanks to Deligne's mixed Hodge theory, see \cite{Del}. The recipe is as follows: we fix a compactification $$j:X^{*}\rightarrow X$$ with boundary a simple normal crossings divisor $$D=\bigcup_{i\in I}D_{i}\subset X.$$ We have then the graded algebra of \emph{log de Rham} forms $$\Omega_{X}^{*}(\operatorname{log}D)\subset j_{*}\Omega^{*}_{U},$$ whose local sections are generated over $\Omega_{X}$ by $d\operatorname{log}f_{i}$ where the $f_{i}$ are rational functions cutting out irreducible components $D_{i}$ of $D$. We then take the  Euler characteristic of this. That the result is independent of the choice of compactification is a consequence of mixed Hodge theory. 

\begin{remark} We include a discussion of $\chi_{y}(U)$ for open varieties $U$, as it is perhaps unfamiliar to some readers. By Deligne's remarkable work the de Rham cohomology $H^{*}(U)$ is endowed with a mixed Hodge structure. Such amounts in particular to an increasing \emph{weight} filtration $$...\subset W^{i}H^{*}(U)\subset W^{i+1}H^{*}(U)\subset ...$$ such that the associated graded $\operatorname{Gr}_{W}^{i} H^{j}(U)$ has a pure Hodge structure of weight $i+j$. For example there is an isomorphism of weight $2$ pure Hodge structures $$\operatorname{Gr}_{W}^{1}(H^{1}(\mathbf{C}^{*}))\simeq H^{2}(\mathbf{P}^{1}).$$ Then we set $$\chi_y(U) = \sum_{k,p,q} (-1)^k h^{p,q}(\operatorname{Gr}_{p+q}^W H^k(U)) y^p.$$ Let us now take \emph{any} compactification $U\subset X$ where the complement $D=X\setminus U$ has simple normal crossings. Then it is a consequence of Deligne's construction of the weight filtration in terms of log de Rham complexes that we also have $$\chi_{y}(U)=\sum_{p}y^{p}\chi(X,\Omega^{p}_{X}(\operatorname{log}D)),$$ whence by Riemann-Roch there is a description as the integral \emph{on the compact space X} of the cohomology class $$\operatorname{ch}\Big(\bigwedge(y\Omega^{1}_{X}(\operatorname{log}D))\Big)\operatorname{Td}(X).$$  Note that the expression for $\chi_{y}$ in terms of the pair $(X,D)$ is far from obviously independent of the choice of pair, that it in fact is independent forms a (small) part of the magic of mixed Hodge theory.  \end{remark}

\begin{remark} Note that the definition $\chi_{y}(U)=\sum_{p}y^{p}\chi(X,\Omega^{p}_{X}(\operatorname{log}D))$ appears already as definition 2.10 of \cite{Baty}, where it is the specialization at $v=1$ of $E$, in the notation of \emph{loc. cit.} \end{remark}

\subsection{Results of this paper}The above suggests a natural guess for our chiral situation. We should define a logarithmic chiral de Rham complex associated to the pair $(X,D)$. This should be a sheaf of conformally graded vertex algebras on $X$, with weight $0$ subspace recovering $\Omega_{X}^{*}(\operatorname{log}D)$. Further, one expects $\mathcal{N}=2$ supersymmetry (resp. extended such) when $(X,D)$ is \emph{log Calabi-Yau} in the sense that there is a volume form on $X\setminus D$ with logarithmic poles along $D$. We show in section 3 that we have the following:

\begin{theorem} Let $(X,D)$ be a log pair with $X^{*}:=X\setminus D$. \begin{itemize}\item There is a sheaf  $\Omega^{ch}_{X}(\operatorname{log}D)$ of conformally graded vertex algebras with conformal weight $0$ subspace  $\Omega_{X}(\operatorname{log}D)$.\item A choice of volume form on $X^{*}$ endows $\Omega^{ch}_{X}(\operatorname{log}D)$ with a topological structure of rank $d=\operatorname{dim}(X)$. If $D$ is in fact anticanonical, then the choice of a top form on $X$ with log poles along $D$ endows $\Omega^{ch}_{X}(\operatorname{log}D)$ with an extended topological structure. \end{itemize}\end{theorem}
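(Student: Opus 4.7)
The plan is to follow the template of Malikov--Schechtman--Vaintrob, replacing the local $bc\beta\gamma$ model with one adapted to the log structure, and then to extract the supersymmetry currents from a choice of volume form.

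\textbf{Construction of the sheaf.} On a chart $U \subset X$ with coordinates $x^{1},\dots,x^{d}$ such that $D \cap U = \{x^{1}\cdots x^{k}=0\}$, I would take as local model a $bc\beta\gamma$ vertex algebra on $d$ generators with OPEs adapted to the log structure: for $i \le k$ the fermionic generator $c^{i}$ geometrises $d\log x^{i}$ and its bosonic partner geometrises the log vector field $x^{i}\partial_{x^{i}}$, while for $i > k$ one keeps the MSV convention. By construction, the conformal weight $0$ subspace is $\Omega_{U}(\log D)$. The essential step, and the main obstacle, is to show that this local model is intrinsic to the pair $(U, D\cap U)$: one must verify that an arbitrary $D$-preserving coordinate change induces an automorphism of the local vertex algebra and that these automorphisms satisfy the cocycle condition on triple intersections. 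The calculation follows the MSV template, with the crucial geometric input being that $D$-preserving coordinate changes preserve the log tangent and log cotangent sheaves, so that the OPEs transform homogeneously; one expects an obstruction-theoretic analysis analogous to that of \cite{MSV}, with the second Chern class replaced by a log version.

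\textbf{Topological structure.} Regardless of a volume form, the local model admits global sections $L$ (the stress tensor, given by the standard free-field expression), $J$ (the ghost-number current, built from the normally ordered bilinears $c^{i}b_{i}$), and $Q$ (the chiralised log de Rham differential), generating the standard structure of a topological conformal vertex algebra. The missing supercurrent $G$ of weight $2$ is where the volume form enters. Writing
$$\omega = f \cdot d\log x^{1} \wedge \cdots \wedge d\log x^{k} \wedge dx^{k+1} \wedge \cdots \wedge dx^{d}$$
locally on $X^{*}$, one constructs $G$ by chiralised contraction with $\omega$ and checks that the coordinate invariance reduces to the matching of Jacobians already present in MSV. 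That $f$ is only meromorphic along $D$ is permissible because the log structure of the local model absorbs exactly the poles of a volume form on $X^{*}$.

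\textbf{Extended topological structure.} In the anticanonical case $D \sim -K_{X}$, the volume form $\omega$ can be chosen with at most log poles along $D$, so that the local function $f$ above is everywhere regular and nowhere vanishing on $X$. This extra regularity means that the dual polyvector $\omega^{-1}$ defines a global section of $\bigwedge^{d} T_{X}(-\log D)$, which can itself be chiralised to yield a second weight-$2$ supercurrent $\bar G$; together with the previous four fields this furnishes the generators of the extended topological vertex algebra at rank $d$. All requisite OPEs are then verified as local calculations in the log $bc\beta\gamma$ model. As noted, the delicate part throughout is the gluing: once it is in place, the construction of $L$, $J$, $Q$, $G$, $\bar G$ is essentially forced by the requirement that each field reproduce the expected classical limit on conformal weight $0$.
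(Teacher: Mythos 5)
Your overall strategy diverges from the paper's in a way that creates real gaps. The paper does \emph{not} build an abstract local model ``with OPEs adapted to the log structure'' and then glue: it defines $\Omega^{ch}_{X}(\operatorname{log}D)$ as the subsheaf of the already-existing sheaf $j_{*}\Omega^{ch}_{X^{*}}$ whose local sections land, in some (hence any) adapted coordinate system, in the subalgebra of $j_{*}\Omega^{ch}_{\Delta^{d}\setminus D_{r}}$ generated by $\gamma^{i},\frac{\partial\gamma^{i}}{\gamma^{i}},\gamma^{i}\beta_{i},\frac{c^{i}}{\gamma^{i}},\gamma^{i}b_{i}$ (for $i\leq r$) together with the usual generators for $i>r$. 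This sidesteps entirely the cocycle/obstruction analysis you defer to: there is no gluing to perform, only the coordinate-independence of the logarithmic condition to verify. That verification is the actual content, and its hard part is the one you wave at: the transformed $\tilde{\beta}^{i}$ carries a correction quadratic in the fermions, and the paper handles $\gamma^{i}\beta_{i}$ indirectly via the identity $Q_{0}(\gamma^{i}b_{i})-c^{i}b_{i}=\gamma^{i}\beta_{i}$ and the coordinate-invariance of $Q_{0}$. Your proposed route also leaves the local model itself underdetermined: the generators $\gamma^{i}$, $\gamma^{i}\beta_{i}$, $\frac{\partial\gamma^{i}}{\gamma^{i}}$, etc.\ satisfy nontrivial relations with normal-ordering corrections (e.g.\ $\partial\gamma^{i}$ versus $\gamma^{i}\cdot\frac{\partial\gamma^{i}}{\gamma^{i}}$), so ``a $bc\beta\gamma$ algebra with OPEs adapted to the log structure'' is not obviously a well-defined object without the ambient embedding.

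For the topological structure you have the roles of $Q$ and $G$ reversed, and this hides the main computation. In the paper (following \cite{MSV}) only the zero modes $L_{0},J_{0},Q_{0}$ are defined unconditionally; the full fields $L(z),J(z),Q(z)$ are \emph{not} global without a volume form, whereas $G=\partial\gamma^{i}b_{i}$ \emph{is} globally defined on all of $X$ with no hypothesis. The volume form on $X^{*}$ is what produces the field $Q$, and the nontrivial point is that $Q=\sum c^{i}_{\phi}\beta^{\phi}_{i}$, written in coordinates $\phi$ adapted to a volume form that does \emph{not} extend across $D$, is still logarithmic in coordinates on $X$. The paper proves this by a Borcherds-formula computation reducing $(\gamma^{n-1}c)(\gamma^{1-n}\beta)$ to $c\beta+(n-1)\frac{\partial c}{\gamma}$ and then exhibiting $\frac{\partial c}{\gamma}$ as logarithmic via $\partial\big(\frac{c}{\gamma}\big)-\frac{c}{\gamma}\frac{\partial\gamma}{\gamma}$. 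Your assertion that ``the log structure absorbs exactly the poles of a volume form on $X^{*}$'' is precisely this claim, stated without proof. Finally, the extended structure in the anticanonical case is obtained by adjoining the modes of the log volume form itself (a weight-$0$, fermion-number-$d$ element of $\Omega^{ch}_{X}(\operatorname{log}D)$) and its dual, \`a la Odake, not a second weight-$2$ supercurrent; this last point is minor, but the reversal of $Q$ and $G$ and the missing pole-absorption computation are genuine gaps.
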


\begin{remark} We stress that this \emph{does not} solve the aforementioned problem of defining elliptic genera for open varieties! The resulting genera will in general be sensitive to the choice of a compactification, even if their limits at $\tau\rightarrow i\infty$ are not.  Nonetheless, we are confident that the above is the correct notion of chiral de Rham complex for pairs $(X,D)$, as the desiderata for such are quite substantial.\end{remark}

In section 4 we will study the cohomology of global sections, as a module for the topological vertex algebra.

\begin{example}\label{toric} Let us consider $X:=\mathbf{P}^{d}$ and $D:=\bigcup_{i}\big\{X_{i}=0\big\}$ the toric boundary. Then we produce from this a module for $\mathbf{top}\{d\}$ by taking cohomology of $\Omega^{ch}_{X}(\operatorname{log}D)$. We can compute the resulting bi-graded character to be $G(q,y)^{d}$ where we have set $$G(q,y)=\prod_{j=1}^{\infty}\frac{(1-q^{j-1}y)(1-q^{j}y^{-1})}{(1-q^{j})^{2}}.$$ A sketch proof appears later in the text, after the relevant definitions have been set up.\end{example}

We observe that $G(q,y)^{d}$ satisfies $G(q,qy)=(-y)^{-d}G(q,y)^{d}$, and is thus a section of the line bundle $\Theta^{\otimes d}$ on $E_{q}:=\mathbf{C}^{*}/q^{\mathbf{Z}}$, by definition the line bundle with automorphy factor $(-y)^{-d}$, see \ref{autom} below for a more precise discussion. This is a general fact when $D$ is anti-canonical, in which case we will say that $(X,D)$ is \emph{log CY}. In the following we set $$\mathcal{H}_{X,D}=H^{*}(X,\Omega^{ch}_{X}(\operatorname{log}D)).$$

\begin{theorem} Let $(X,D)$ be log CY of dimension $d$ and define $$\operatorname{Ell}_{X,D}(q,y)=\operatorname{tr}_{\mathcal{H}_{X,D}}(q^{L_{0}}(-y)^{J_{0}}).$$ Then we have $\operatorname{Ell}_{X,D}(q,y)\in\Gamma(E_{q},\Theta^{\otimes d})$.\end{theorem}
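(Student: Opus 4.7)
The plan is to derive the theta-function automorphy $\operatorname{Ell}_{X,D}(q,qy)=(-y)^{-d}\operatorname{Ell}_{X,D}(q,y)$ directly from the extended topological structure furnished by the first theorem. Under the log CY hypothesis $D\sim -K_{X}$, the top form on $X$ with logarithmic poles along $D$ furnishes the extra generators of the extended topological vertex algebra acting on $\mathcal{H}_{X,D}$, and I would use these to produce invertible zero-mode operators $U^{\pm 1}$ on $\mathcal{H}_{X,D}$ realising the unit spectral flow of the underlying rank-$d$ topological vertex algebra $\mathbf{top}\{d\}$.

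First I would record how conjugation by $U$ twists the grading operators: in the conventions of $\mathbf{top}\{d\}$, the spectral flow satisfies $UL_{0}U^{-1}=L_{0}+J_{0}$ and $UJ_{0}U^{-1}=J_{0}+d$, the shift by $d$ being the rank contribution. Cyclicity of the trace then yields
\[
\operatorname{Ell}_{X,D}(q,y) = \operatorname{tr}\bigl(Uq^{L_{0}}(-y)^{J_{0}}U^{-1}\bigr) = \operatorname{tr}\bigl(q^{L_{0}+J_{0}}(-y)^{J_{0}+d}\bigr) = (-y)^{d}\operatorname{Ell}_{X,D}(q,qy),
\]
which rearranges to the required automorphy. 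I would then check that $\mathcal{H}_{X,D}$ is bigraded by $(L_{0},J_{0})$ with finite-dimensional bigraded pieces and $L_{0}$-spectrum bounded below, which via properness of $X$ and coherence of each conformal-weight-$n$ subsheaf reduces to a direct computation on the local $bc\beta\gamma$-with-log model. The weight-$n$ contribution is then a Laurent polynomial in $y$, so the formal $q$-series assembles into a function holomorphic in $y\in\mathbf{C}^{*}$ for $|q|<1$, which together with the automorphy above places it in $\Gamma(E_{q},\Theta^{\otimes d})$.

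The main obstacle, as I see it, is verifying that the spectral flow operator $U$ truly globalises to an invertible endomorphism of $\mathcal{H}_{X,D}$, rather than merely a local mode on the affine model underlying $\Omega^{ch}_{X}(\operatorname{log}D)$. This is precisely where the anticanonical hypothesis does real work: it trivialises the line bundle $K_{X}(D)$ whose class is the obstruction to globalising $U^{-1}$, which is morally why the same condition is what upgrades the topological structure to an extended one in the first theorem. Once this globalisation is secured, the commutator formulas for $U$ descend from the local affine model, and the theta-function transformation is immediate.
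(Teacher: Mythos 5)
Your strategy is genuinely different from the paper's. The paper never touches spectral flow in its proof: it uses the filtration of \cite{MSV} (Lemma 4.6 in the text) to identify the associated graded of $\Omega^{ch}_{X}(\operatorname{log}D)$ with the coherent object $\operatorname{Ell}(\Omega_{X}(\operatorname{log}D))(q,y)$, so that by properness and Riemann--Roch the character is the integral over $X$ of $\operatorname{ch}\big(\operatorname{Ell}(\Omega_{X}(\operatorname{log}D))(q,y)\big)\operatorname{Td}(X)$. The short exact sequence $0\rightarrow\Omega_{X}\rightarrow\Omega_{X}(\operatorname{log}D)\rightarrow\oplus_{j}\mathcal{O}_{D_{j}}\rightarrow 0$ and the multiplicativity of $\lambda_{\operatorname{ell}}$ then give an explicit product of $\tilde\vartheta$-quotients in the Chern roots $\alpha_{i}$, $\delta_{j}$, and the automorphy factor under $y\mapsto qy$ collapses to $(-y)^{-d}$ precisely because the log CY condition forces $\sum_{i}\alpha_{i}=\sum_{j}\delta_{j}$. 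Note also that your finiteness step as phrased is off: the conformal-weight-$n$ subsheaf of $\Omega^{ch}_{X}(\operatorname{log}D)$ is not a coherent $\mathcal{O}_{X}$-module (the sheaf is not $\mathcal{O}_{X}$-linear at all), and it is exactly the passage to the associated graded of the MSV filtration that produces a (quasi-)coherent object to which properness can be applied. Your route, if completed, would buy a conceptual explanation of ellipticity and would apply without computing any Chern classes; the paper's route buys an unconditional, explicit formula at the cost of being computational.

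The genuine gap is the one you flag yourself and then wave through: the existence of a globally defined, invertible operator $U$ on $\mathcal{H}_{X,D}$ implementing unit spectral flow. This is not a technicality to be ``secured'' after the fact --- it is the entire content of the argument, and in the non-logarithmic Calabi--Yau case it is the main theorem of the author's separate paper \cite{Boua} (which the present paper cites in a remark precisely to say that ellipticity is a ``shadow'' of that result, while declining to prove it this way). Moreover, your proposed construction of $U$ as a ``zero mode'' of the extended generators does not work as stated: spectral flow is an equivalence between a module and its spectrally flowed twist, not conjugation by a mode of the vertex algebra, and the zero mode of the log volume form field is nilpotent rather than invertible on essentially all states. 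What the anticanonical hypothesis actually hands you directly is a global section (the log volume form) inside $\Omega^{ch}_{X}(\operatorname{log}D)$; upgrading that to an isomorphism $\mathcal{H}_{X,D}\cong\mathcal{H}_{X,D}^{(1)}$ of flowed modules, compatibly with the gluing over $X$ and with the logarithmic subsheaf condition along $D$, is unproven here. Until that is supplied, the trace manipulation, though formally consistent with the claimed automorphy $\operatorname{Ell}_{X,D}(q,qy)=(-y)^{-d}\operatorname{Ell}_{X,D}(q,y)$, has no operator to be cyclic about.
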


\begin{remark}In \cite{BorLib} an elliptic genus is defined for \emph{singular} varieties by taking resolutions and modifying the elliptic genus of the total space by certain $\vartheta$ quotients in the classes of exceptional divisors. In particular a notion of elliptic genus for pairs arises in \emph{loc. cit.} The construction of this note differs somewhat from that of \emph{loc. cit.}, in which it is unclear how to realise the genus as the trace of a sheaf of $\mathcal{N}=2$ vertex algebras. \end{remark}

In section 5, which we have included as we find it clarifying geometrically, we develop the geometric idea sketched in subsection \ref{geom}. In particular we identify a Lagrangian subalgebra of $\Omega^{ch}_{X}(\operatorname{log}D)$, which is a logarithmic analogue of the $c\gamma$ system on $X$, namely $\mathcal{V}^{c\gamma}_{X}:=\Omega(JX)$. Writing $\mathcal{V}^{c\gamma}_{X,D}$ for the logarithmic $c\gamma$ system, we prove the following, where relevant terms are defined properly in the sequel.

\begin{theorem} There is an isomorphism of sheaves of commutative vertex algebras $$\mathcal{V}^{c\gamma}_{X,D}\cong\Omega_{J_{\operatorname{log}D}(X)}(\operatorname{log}\widetilde{D}),$$ where $\widetilde{D}$ denotes the divisor cut out by the condition $\varphi(0)\in D$.\end{theorem}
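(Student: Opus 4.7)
The plan is to verify the isomorphism locally in an adapted coordinate chart and then invoke the functoriality of both constructions in the pair $(X,D)$ (with respect to logarithmically smooth morphisms) to globalize. Both sides are commutative vertex algebras, i.e. commutative algebras equipped with a derivation $T = L_{-1}$, so the statement amounts to identifying the underlying commutative $D$-algebras, or equivalently, checking that they have the same generators, relations, and $T$-action.

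Fix coordinates $x_{1},\dots ,x_{d}$ on an open $U\subset X$ with $D\cap U=\{x_{1}\cdots x_{k}=0\}$. From the construction of $\Omega^{ch}_{X}(\operatorname{log}D)$ carried out in section 3, the Lagrangian subalgebra $\mathcal{V}^{c\gamma}_{X,D}$ is, over $U$, freely generated as a commutative $\mathbf{C}$-algebra with derivation $T$ by the weight-zero fields $x_{i}$ ($1\le i\le d$), $d\log x_{i}$ ($i\le k$) and $dx_{j}$ ($j>k$), subject only to the Leibniz relation $T(d\log x_{i})=d(T\log x_{i})$. On the other side, unwinding the definition of $J_{\operatorname{log}D}(X)$ as universally adjoining a vector field tangent to $D$ gives a local coordinate ring $\mathbf{C}[x_{i},u_{i,n},y_{j,n}]_{i\le k,j>k,n\ge 1}$ where $u_{i,n}:=T^{n}\log x_{i}$ and $y_{j,n}:=T^{n}x_{j}$; the operator $T$ shifts indices, and $T x_{i}=x_{i}u_{i,1}$ for $i\le k$ recovers the honest jet coordinates. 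The divisor $\widetilde{D}$ is cut out locally by $\prod_{i\le k}x_{i}$, and $\Omega_{J_{\operatorname{log}D}(X)}(\operatorname{log}\widetilde{D})$ is freely generated over this ring by $d\log x_{i}$ ($i\le k$), $dx_{j}$ ($j>k$), $du_{i,n}$ and $dy_{j,n}$, with $T$ extended as the unique derivation commuting with $d$.

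The comparison map is then forced: it sends $x_{i}\mapsto x_{i}$, $d\log x_{i}\mapsto d\log x_{i}$, $dx_{j}\mapsto dx_{j}$ and is extended to higher-weight generators by applying $T^{n}$ on both sides. Both descriptions produce the same free polynomial algebra on the same $T$-compatible set of even and odd generators, so this is a bijection on local sections. Finally, both constructions are manifestly covariant under changes of coordinates preserving $D$ (equivalently, under $\operatorname{Aut}(X,D)$-bundle twists in the Harish-Chandra/formal geometry framework implicit in the construction of $\Omega^{ch}_{X}(\operatorname{log}D)$), so the local isomorphisms are intertwined by these automorphisms and glue to a global isomorphism of sheaves of commutative vertex algebras.

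The only genuine effort is making the local normal form on the right-hand side precise: one must check that ``universally adjoining a tangential vector field'' really produces the log-jet coordinate ring above, with the claimed transformation rule $T x_{i}=x_{i}u_{i,1}$. This is a direct consequence of the universal property, but it is the main step where one must avoid circular invocations. Once this local model is in hand, the matching of generators with the bosonic and fermionic fields of $\mathcal{V}^{c\gamma}_{X,D}$ is an inspection, and gluing is automatic from functoriality.
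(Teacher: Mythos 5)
Your overall strategy --- identify both sides locally as explicit polynomial $T$-algebras on matching generators, then glue by covariance under $D$-preserving coordinate changes --- is exactly the route the paper takes; the paper's own proof is essentially a one-line remark that, given the local generators of $\Omega^{ch}_{\Delta^{d}}(\log D_{r})$ and the universal property of $J_{\log D}(X)$ as the free log commutative vertex scheme on $(X,D)$, nothing remains beyond allowing log poles along $\widetilde{D}$ to account for the fields $c^{i}/\gamma^{i}$. Your description of the right-hand side, including the coordinates $u_{i,n}$ and the relation $Tx_{i}=x_{i}u_{i,1}$, is the correct local normal form.

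There is, however, a concrete slip in your local model of the left-hand side. You assert that $\mathcal{V}^{c\gamma}_{X,D}$ is generated over $U$, as a commutative algebra with derivation, by the weight-zero fields $x_{i}$, $d\log x_{i}$ ($i\le k$) and $dx_{j}$ ($j>k$). This omits the weight-one even generators $\partial\log x_{i}=\frac{\partial x_{i}}{x_{i}}$ --- the fields $\frac{\partial\gamma^{i}}{\gamma^{i}}$ appearing explicitly in the paper's definition of $\Omega^{ch}_{\Delta^{d}}(\log D_{r})$ --- and these are \emph{not} recoverable from your list: closing $\{x_{i},\,d\log x_{i},\,dx_{j}\}$ under products and $T$ produces in even form-degree zero only the ordinary jet functions $\mathbf{C}[x_{i},Tx_{i},T^{2}x_{i},\dots]$, never the element $Tx_{i}/x_{i}$. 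Consequently the comparison map you describe (send the listed generators to themselves and extend by $T^{n}$) does hit the odd coordinates, since $T^{n}(d\log x_{i})=du_{i,n}$, but never hits $u_{i,n}$ itself; as written it is not surjective onto $\Omega_{J_{\log D}(X)}(\log\widetilde{D})$, and the algebra you have written on the left is strictly smaller than $\mathcal{V}^{c\gamma}_{X,D}$. The repair is immediate --- add $\frac{\partial x_{i}}{x_{i}}$ ($i\le k$) to your generating set and send it to $u_{i,1}$ --- but it is worth noting that this is the essential point of the whole construction: unlike $\mathcal{V}^{c\gamma}_{X}$, the log $c\gamma$ system is not generated by its conformal weight zero subspace, and the extra weight-one generators are precisely what makes $J_{\log D}(X)$ a genuine birational modification of $JX$. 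With that correction your argument is complete and coincides with the paper's.
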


\subsubsection{Some remarks on our geometric context}\label{context} We work throughout in the category of algebraic varieties over the field $\mathbf{C}$, although all of our results are valid in the broader context of algebraic varieties over a base field $k$ of characteristic $0$. We could also work in the holomorphic category and the results would be essentially unchanged. The chiral de Rham complex of \cite{MSV} makes sense also in the smooth category, but in this context the notion of logarithmic singularity is missing and so our results do not have clear analogues in the smooth context. In keeping with our algebro-geometric context we shall adopt standard algebro-geometric notation. For example by \emph{bundle} $\mathcal{E}$ the reader is to understand \emph{locally free sheaf} $\mathcal{E}$. Associating to $\mathcal{E}$ its total space $\operatorname{Tot}(\mathcal{E})\rightarrow X$ we obtain a \emph{geometric bundle}, and this association supplies an anti-equivalence between the respective categories of algebraic and geometric objects. It is perhaps also worth remarking on the presence of the word \emph{topological} in this text. This descriptor forms a part of the name of a specific vertex algebra, and then we say that a module over this vertex algebra is \emph{endowed with a topological structure}. It is not to say that at any point of this note our interest is in topological manifolds.

\subsection{Acknowledgements} We are very grateful to the two anonymous referees for their careful reading of this work. We also thank Fedya Malikov for providing feedback on a draft version.

\section{Chiral de Rham complex}\subsection{Vertex algebra language} We assume familiarity with the basic theory of vertex algebras, for which the reader can consult \cite{Kac} for an excellent introduction. Given a vertex algebra $V$, we let $\operatorname{vac}$ denote the vacuum element and $\partial$ denote the infinitesimal translation. If $v\in V$, we write $$v(z)=\sum v_{(i)}z^{-1-i}$$ for the corresponding field. We will sometimes abuse notation and identify a vector in a vertex algebra with the field it generates. If we have fixed a conformal grading of $V$ then for a vector $v$ of weight $\Delta(v)$ we define operators $v_{i}$ (note the lack of brackets around the subscript $i$) by $$v(z):=\sum v_{i}z^{-\Delta(v)-1-i}.$$ If $v$ and $w$ are vectors in $V$ then we abbreviate $v_{(-1)}w$ as $vw$, and caution the reader that this multiplication is neither commutative nor associative in general. \subsection{Construction of $\Omega^{ch}_{X}$} So as to fix some notation, we very briefly recall the definition of the chiral de Rham complex associated to a smooth variety $X$, complete definitions can be found in the original paper \cite{MSV}, which is our main reference for this subsection. 

 $\Omega^{ch}_{X}$ is a sheaf of vertex algebras on $X$. It is constructed first on an affine space $\mathbf{A}^{d}$ with coordinates $\gamma^{1},...,\gamma^{d}$. 

\begin{definition} $\Omega^{ch}_{\mathbf{A}^{d}}$ is defined to be the $bc\beta\gamma$ system, namely the vertex algebra generated by bosonic fields $\{\gamma^{i}(z),\beta_{i}(z)\}$ and fermionic fields $\{b_{i}(z), c^{i}(z)\}$, for $i=1,...,d$, subject only to the OPEs $$\gamma^{i}(z)\beta_{j}(w)\sim\frac{\delta_{ij}}{(z-w)}$$ $$b_{i}(z)c^{j}(w)\sim\frac{\delta_{ij}}{(z-w)}.$$\end{definition}\begin{remark} Informally the fields have the following meanings - $\gamma^{i}$ are coordinates on $\mathbf{A}^{d}$, with $\beta_{i}$ the corresponding dual derivations. $c^{i}$ and $b_{i}$ are then interpreted as one forms and odd vector fields respectively.\end{remark} \begin{remark}A notational caution - we have chosen the physicists' notation for our fields, in \cite{MSV} the fields are labelled $a,b,\varphi,\psi$. The translation from our notation to the notation of \emph{loc. cit.} is as follows; $\gamma\mapsto b$, $\beta\mapsto a$, $b\mapsto \psi$, $c\mapsto \varphi$.\end{remark}

\begin{definition} Completion along the point $0\in \mathbf{A}^{d}$ defines the vertex algebra $\Omega^{ch}_{\Delta^{d}}$ associated to a formal disc $\Delta^{d}$ with coordinates $\gamma^{1},...,\gamma^{d}$. \end{definition}

\begin{remark} Explicitly this means we allow power series in the variables $\gamma^{j}_{(-1)}$, which we interpret as our coordinates $\gamma^{j}$, see subsection 3.1 of \emph{loc. cit}.\end{remark}

In order to glue the above formal local models corresponding to discs $\Delta^{d}$, we must remove the dependence on a choice of formal coordinates at a point of $X$. This is achieved by the following theorem, which is one of the main results of \emph{loc. cit.} \begin{theorem}(\cite{MSV}) There is an action of the group $G_{d}$ of formal coordinate transformations on the above vertex algebra. The resulting sheaf of vertex algebras on $X$ is then denoted $\Omega^{ch}_{X}$. \end{theorem}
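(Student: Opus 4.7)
The plan is to construct the $G_{d}$-action infinitesimally by exhibiting a Lie algebra homomorphism from the Lie algebra $W_{d}:=\operatorname{Der}\mathbf{C}[[\gamma^{1},\ldots,\gamma^{d}]]$ of formal vector fields to the Lie algebra of zero-mode derivations of $\Omega^{ch}_{\Delta^{d}}$, and then integrating it to a genuine $G_{d}$-action using the pro-algebraic structure of $G_{d}$. Once the action is in hand, the sheaf $\Omega^{ch}_{X}$ is defined as the associated bundle for the canonical $G_{d}$-torsor on $X$ that parametrises formal coordinate systems at points of $X$.

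For a formal vector field $v=v^{i}(\gamma)\partial_{i}$ the natural candidate current is
$$J_{v}(z):={:}v^{i}(\gamma)\beta_{i}{:}(z)+{:}(\partial_{j}v^{i})(\gamma)\,b_{i}c^{j}{:}(z),$$
the first summand encoding the action of $v$ on functions and the second the induced action on one-forms and odd vector fields via the Jacobian of the change of coordinates. The proposed derivation is then the zero-mode $(J_{v})_{(0)}$. The central computation is the verification of the bracket identity $[(J_{v})_{(0)},(J_{w})_{(0)}]=(J_{[v,w]})_{(0)}$. Expanding the OPE of $J_{v}(z)J_{w}(w)$, the simple pole directly reproduces the classical Lie derivative bracket $J_{[v,w]}(w)$, but double Wick contractions yield \emph{a priori} nonzero anomaly terms in both the $\beta\gamma$ and $bc$ sectors. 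The heart of the theorem is that these two anomalies are equal and opposite and so cancel; this is the vertex algebraic avatar of the cancellation between bosonic and fermionic ghost systems associated to the same classical target, and is precisely what makes the de Rham direction of $\Omega^{ch}_{X}$ work. This computation is the one step I expect to consume real effort, since the signs and normal ordering constants must be tracked carefully, and every residual anomaly would obstruct the gluing.

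Integrating the resulting Lie algebra action to a $G_{d}$-action is comparatively routine. $G_{d}$ decomposes as $\operatorname{GL}_{d}\ltimes G_{d}^{+}$ with $G_{d}^{+}$ pro-unipotent; the $\operatorname{GL}_{d}$-piece acts on the generators in its defining representation (on $\gamma^{i}, c^{i}$) and the dual (on $\beta_{i},b_{i}$), and this extends to the whole vertex algebra by functoriality of vertex operations, while $G_{d}^{+}$ is integrated by the exponential of the Lie algebra action, which converges termwise because each mode of each generator receives only finitely many nonzero contributions at a given conformal weight. Once the action exists, standard descent along the $G_{d}$-torsor of formal coordinates over a smooth $X$ produces the sheaf $\Omega^{ch}_{X}$ unambiguously; since every other ingredient is either formal pro-algebraic manipulation or classical descent, the main obstacle really is the anomaly cancellation at the Lie algebra level.
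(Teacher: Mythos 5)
This theorem is not proved in the paper at all: it is imported verbatim from \cite{MSV}, so the only comparison available is with the argument of \emph{loc.\ cit.} Your route is a legitimate and essentially standard one, but it is genuinely different from the one in \cite{MSV}. There, the action is defined at the \emph{group} level by the explicit transformation formulas (3.17a)--(3.17d) --- $\gamma$ and $c$ transform as functions and one-forms, $b$ as a vector field, and $\tilde{\beta}$ acquires the correction term quadratic in the fermions --- and the theorem is the direct verification that these assignments compose correctly under composition of coordinate changes. Your proposal instead works infinitesimally with the current $J_{v}={:}v^{i}\beta_{i}{:}+{:}(\partial_{j}v^{i})b_{i}c^{j}{:}$ (whose zero mode applied to $\beta_{k}$ reproduces exactly the infinitesimal version of that quadratic correction), proves the bracket identity via cancellation of the $\beta\gamma$ and $bc$ Wick anomalies, and then integrates using the Harish--Chandra pair $(W_{d},G_{d})$ and descent along the torsor of formal coordinates; this is the Gelfand--Kazhdan-style presentation one finds in Frenkel--Ben-Zvi rather than in \cite{MSV} itself. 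Each approach buys something: yours isolates the conceptual reason the construction works for arbitrary smooth $X$ (the fermions cancel the obstruction that blocks the purely bosonic $\beta\gamma$ system), while the explicit formulas of \cite{MSV} are what the present paper actually needs downstream, since Lemma 3.3 manipulates (3.17a)--(3.17d), and in particular the $\tilde{\beta}$ formula, directly. Two small points of care in your sketch: for the zero-mode bracket $[(J_{v})_{(0)},(J_{w})_{(0)}]=((J_{v})_{(0)}J_{w})_{(0)}$ only the first-order pole matters, and what one actually obtains there is $J_{[v,w]}$ \emph{modulo a total derivative} $\partial(\cdots)$, which is harmless since $(\partial a)_{(0)}=0$ --- so the statement ``the anomalies are equal and opposite'' should be read at that level of precision; and the constant vector fields $\partial_{i}\in W_{d}$ do not integrate into $G_{d}$, so the integration step must be phrased for the subalgebra of non-negative degree, which your decomposition $\operatorname{GL}_{d}\ltimes G_{d}^{+}$ implicitly but correctly does.
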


\subsection{Extended topological symmetry of $\Omega^{ch}_{X}$}

Relevant to our study is a certain vertex algebra, referred to as the \emph{topological vertex algebra at rank d}. This is a so-called \emph{topological twist} of the famous $\mathcal{N}=2$ superconformal vertex algebra.

\begin{definition} The \emph{topological vertex algebra}, $\mathbf{top}\{d\}$, of \emph{rank} $D$ is the super vertex algebra generated by vectors $\{L,J,Q,G\}$ such that $L$ and $J$ are bosonic and $Q$ and $G$ are fermionic. A full list of the non-trivial OPEs to which the corresponding fields are subject is found in \cite{MSV}, subsection 2.1. \end{definition}

\begin{example} We include some facts about $\mathbf{top}\{d\}$, so that the reader can get a feel for it. We have the following: \begin{itemize}\item $L$ generates a copy of the centreless Virasoro. This induces a conformal grading on  $\mathbf{top}\{d\}$. \item The even field $J(z)$ satisfies $$J(z)J(w)\sim\frac{d}{(z-w)^{2}},$$ so that $J$ generates a representation of the Heisenberg at level $d$. \item The odd fields satisfy the OPE $$Q(z)G(w)\sim\frac{L(w)}{(z-w)}+\frac{J(w)}{(z-w)^{2}}+\frac{d}{(z-w)^{3}}$$ and so in particular we have $[G_{0},Q_{0}]=L_{0}$. \end{itemize}\end{example}

\begin{definition} A vertex algebra $V$ equipped with a distinguished map $\mathbf{top}\{d\}\rightarrow V$ is referred to as a topological vertex algebra of rank $d$. \end{definition}

Then we have the following theorem, proven in \emph{loc. cit.} We will call a variety Calabi-Yau if it is equipped with a non-vanishing volume form.  \begin{theorem}(\cite{MSV}.) For $X$ Calabi-Yau, there are global sections $L,J,Q,G$ of  $\,\Omega^{ch}_{X}$ giving it the structure of a topological vertex algebra at rank $d$. Formally locally these are expressed explicitly as follows, where we sum over repeated indices: \begin{itemize}\item $ L=\partial \gamma^{i}\beta_{i}+\partial c^{i}b_{i}$
\item $J=c^{i}b_{i}$
\item $Q=\beta_{i}c^{i}$
\item $G=\partial \gamma^{i}b_{i}$.\end{itemize}\end{theorem}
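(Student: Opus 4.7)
The plan is to establish the theorem in two independent steps. First, verify the defining OPEs of $\mathbf{top}\{d\}$ for the four fields $L, J, Q, G$ in a local coordinate chart, where $\Omega^{ch}_X$ reduces to the $bc\beta\gamma$ system on $\mathbf{A}^d$. Second, show that these local expressions descend to global sections by verifying invariance under the $G_d$-action of formal coordinate changes. The Calabi-Yau hypothesis enters only in the second step, and only to secure the global existence of $G$.

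For the OPE verification, work in the formal local model where only $\gamma^i(z)\beta_j(w) \sim \delta_{ij}/(z-w)$ and $b_i(z)c^j(w) \sim \delta_{ij}/(z-w)$ have singular contractions. Applying Wick's theorem term by term: in $Q(z)G(w) = \beta_i c^i(z) \cdot \partial\gamma^j b_j(w)$, pairing $\beta_i$ with $\partial\gamma^j$ gives a double pole, pairing $c^i$ with $b_j$ gives a simple pole, and the double contraction produces a triple pole. Summing over indices produces the central charge $d = \sum_{i=1}^d 1$ in the triple pole; the double pole reassembles as $J(w)$ and the simple pole as $L(w)$, matching the topological OPE recalled above. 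The remaining OPEs ($LL$, $LJ$, $JJ$, $QQ$, $GG$, $JQ$, $JG$) follow by analogous bookkeeping and present no conceptual difficulty.

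For the gluing step, use the $G_d$-action of the MSV theorem. Under a formal coordinate change $\gamma^i \mapsto f^i(\gamma)$, the classical transformation laws for $\gamma, \beta, c, b$ as coordinate, vector field, one-form and dual odd vector receive quantum corrections from normal-ordering, which must be tracked carefully. The field $J = c^i b_i$ is manifestly invariant because the Jacobian factors for $c$ and $b$ are mutually inverse and the would-be quantum corrections cancel. The stress tensor $L$ is characterized intrinsically by inducing the conformal grading, so its invariance is automatic. The field $Q$ is invariant modulo an inessential total $\partial$-derivative (which does not affect its zero mode, the chiral de Rham differential), and thus descends to a global section for arbitrary smooth $X$.

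The main obstacle, and the only place the Calabi-Yau condition enters, is the invariance of $G = \partial\gamma^i b_i$. A direct expansion produces an anomalous term proportional to $\partial_z\log\det(\partial f^i/\partial \gamma^j)$, which represents (the Atiyah class of) the canonical bundle $K_X$. A nowhere-vanishing volume form $\omega$ trivializes this class and allows a canonical correction by a term built from $\partial\log\omega$, yielding a globally well-defined $G$ that, together with $L, J, Q$, satisfies the desired OPEs. Computing this anomaly explicitly and verifying that the proposed correction exactly cancels it is the technical heart of the argument.
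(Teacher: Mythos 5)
The paper does not actually prove this statement --- it is imported wholesale from \cite{MSV} (the relevant material is Section 4 of that paper) --- so there is no in-text argument to measure you against; the comparison below is with the MSV proof your sketch is implicitly reconstructing. Your first step, checking the topological OPEs by Wick contractions in the flat $bc\beta\gamma$ model, is correct in outline: the single contractions in $Q(z)G(w)$ do assemble into $L$ and $J$ and the double contraction yields the rank $d$.

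The globalization step, however, contains a genuine error about where the Calabi--Yau hypothesis enters. You assert that $L$, $J$ and $Q$ are global for arbitrary smooth $X$ and that only $G$ carries an anomaly. In fact only $L$ is invariant under all of $G_d$ (this is what makes $\Omega^{ch}_X$ a sheaf of conformal vertex algebras in general); each of $J$, $Q$ and $G$ acquires an anomalous correction under a coordinate change, and the obstruction in each case is $c_1(X)$. For $J=c^ib_i$ the Jacobian factors cancel only classically: the normal-ordering corrections do \emph{not} cancel, and they produce precisely the term $\partial\log\det(\partial g/\partial\gamma)$ that you have attributed to $G$. (Note that this purely bosonic expression has fermion number $0$ and so cannot be the anomaly of $G$, whose charge is $-1$; it is the anomaly of $J$.) For $Q$ the correction is a total derivative, so the zero mode $Q_0$ --- the chiral de Rham differential --- is globally defined for any $X$, but the vector $Q$ itself is not; your sentence ``invariant modulo a total $\partial$-derivative, and thus descends to a global section'' conflates the field with its zero mode. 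The remark immediately following the theorem in the present paper records exactly this dichotomy: all four vectors are global only in the Calabi--Yau case, while $L_0$, $J_0$, $Q_0$ are defined in general. Finally, the mechanism in \cite{MSV} is not to add a counterterm built from $\partial\log\omega$ to $G$ alone, but to use the volume form to pass to an atlas of formal coordinates with unimodular transition Jacobians, which kills the anomalies of $J$, $Q$ and $G$ simultaneously; if you prefer a counterterm formulation you must correct all three fields coherently and re-verify the topological OPEs for the corrected generators.
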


\begin{remark} In fact, whilst the above vectors are only globally well defined in case $X$ is Calabi-Yau, the explicit form of their transformations implies that in all cases $J_{0}$, $L_{0}$ are defined. They are further semi-simple and so $\Omega^{ch}_{X}$ is bigraded by \emph{conformal weight} and \emph{fermion number}. Similarly $Q_{0}$ is defined in all cases. It is square zero and referred to as the \emph{chiral de Rham differential}. It restricts on the conformal weight $0$ subspace to the de Rham differential. \end{remark}

As first observed in \cite{Odake}, a natural extension of the topological vertex algebra at rank $d$ actually acts on $\Omega^{ch}_{X}$ for a Calabi-Yau $X$. This is essentially gotten by adding the volume form to the topological algebra inside $\Omega^{ch}_{X}$, see \emph{loc. cit.} for a definition. The resulting algebra will be referred to as the \emph{extended} topological vertex algebra at rank $d$, and denoted $\mathbf{top}\{d\}^{!}$. A vertex algebra equipped with a morphism from $\mathbf{top}\{d\}^{!}$ is said to possess the structure of an \emph{extended topological vertex algebra} at rank $d$.

\begin{theorem} (\cite{Odake}) The specification of a volume form on $X$ endows $\Omega^{ch}_{X}$ with the structure of an extended topological algebra at rank $d$. \end{theorem}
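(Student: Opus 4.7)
The plan is to promote the volume form $\omega$ on $X$ to two extra global sections $\Omega^{\pm}$ of $\Omega^{ch}_{X}$, and to verify that together with $L,J,Q,G$ these generate a copy of $\mathbf{top}\{d\}^{!}$ acting on $\Omega^{ch}_{X}$. If locally $\omega = f \cdot d\gamma^{1}\wedge\cdots\wedge d\gamma^{d}$, with $f$ non-vanishing, I propose
$$\Omega^{+} := f \cdot c^{1}c^{2}\cdots c^{d}, \qquad \Omega^{-} := f^{-1} \cdot b_{d}b_{d-1}\cdots b_{1},$$
fermionic vectors of bi-degree (conformal weight, fermion number) equal to $(0, d)$ and $(d, -d)$ respectively.

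The first step is to check that $\Omega^{\pm}$ are well-defined under the $G_{d}$-action of Malikov--Schechtman--Vaintrob. For $\Omega^{+}$ this is essentially classical: the weight $0$ subspace of $\Omega^{ch}_{X}$ is canonically the de Rham algebra $\Omega^{*}_{X}$ with $c^{i}\leftrightarrow d\gamma^{i}$, so $\Omega^{+}$ is just the globally defined top form $\omega$ sitting inside $\Omega^{ch}_{X}$. For $\Omega^{-}$ the situation is subtler because it lives in positive conformal weight, so the $G_{d}$-action on each $b_{i}$ involves corrections beyond the naive transformation by the inverse Jacobian. One must show these corrections cancel in the totally antisymmetric product $b_{d}\cdots b_{1}$; this is a fermionic-exclusion argument in which every correction substitutes some $b_{i}$ by an expression involving fewer than $d$ distinct $b$ indices and hence vanishes upon contraction with the remaining factors. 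Combined with the transformation $f^{-1}\mapsto f^{-1}\cdot\det(\partial\gamma'/\partial\gamma)$ dual to that of the coefficient of $\omega$, this yields global well-definedness.

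The second step is to verify the OPEs defining $\mathbf{top}\{d\}^{!}$ among $\{L, J, Q, G, \Omega^{\pm}\}$. This is a formal Wick-theorem computation in the $bc\beta\gamma$ system, which I would carry out locally on $\mathbf{A}^{d}$. The scalar factors $f^{\pm 1}$ depend only on the $\gamma^{j}$, so they contribute to the singular part of OPEs only through contractions with $\beta_{i}$ (hence through the images of $L,J,Q,G$); the essential combinatorics reduces to that of the multi-fermion products $c^{1}\cdots c^{d}$ and $b_{d}\cdots b_{1}$. The key checks are that $\Omega^{\pm}$ carry the stated $(L_{0}, J_{0})$-weights, that $[Q_{0},\Omega^{+}] = 0$ (reflecting $d\omega = 0$), that $[G_{0},\Omega^{-}]$ produces the prescribed descendant, and that the OPE $\Omega^{+}(z)\Omega^{-}(w)$ yields the expected combination of $L, J, G, Q$ together with the central terms needed to close $\mathbf{top}\{d\}^{!}$.

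The principal obstacle is the globality argument for $\Omega^{-}$: the genuine work is showing that the MSV corrections to the transformation of $b_{1},\dots,b_{d}$ have no effect on the totally antisymmetric product. Once this is in place the remaining OPE calculations, while tedious, are routine applications of the standard machinery, and the assignment of the extra generators of $\mathbf{top}\{d\}^{!}$ to $\Omega^{\pm}$ extends uniquely to the desired morphism of sheaves of vertex algebras.
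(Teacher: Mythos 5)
This theorem is quoted in the paper without proof: it is attributed to \cite{Odake}, and the paper's only gloss is that the extension is ``gotten by adding the volume form to the topological algebra inside $\Omega^{ch}_{X}$,'' with the mathematical details deferred to \cite{Boua}. Your outline is the standard construction carried out in those references: adjoin $\Omega^{+}=f\,c^{1}\cdots c^{d}$ (the volume form in conformal weight $0$) and the dual polyvector $\Omega^{-}=f^{-1}b_{d}\cdots b_{1}$ of bidegree $(d,-d)$, check globality, and close the OPEs. So in substance you are reproving the cited result by the same route it is proved in the literature, and the skeleton is right.

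One correction, though, and it concerns precisely the step you single out as ``the principal obstacle.'' In the MSV transformation formulae (3.17a)--(3.17d), quoted in the paper's proof of Lemma \ref{coord}, it is only the fields $\beta_{i}$ that acquire the quadratic fermionic correction $\frac{\partial^{2}f^{k}}{\partial\tilde{\gamma}^{i}\partial\tilde{\gamma}^{l}}\frac{\partial g^{l}}{\partial\gamma^{r}}c^{r}b_{k}$; the odd fields $b_{i}$ transform \emph{strictly} tensorially, as vector fields, with no correction terms. There is therefore nothing to cancel in $b_{d}\cdots b_{1}$. Moreover, since the $\gamma$'s and $b$'s generate a commutative vertex subalgebra (all pairwise OPEs among them are regular), the normally ordered product $f^{-1}b_{d}\cdots b_{1}$ behaves entirely classically under coordinate change and transforms as a section of $\bigwedge^{d}T_{X}$, so globality of $\Omega^{-}$ is immediate from the trivialisation of $\Omega^{d}_{X}$ by $\omega$. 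The genuinely quantum input in this circle of ideas sits elsewhere, in the $\beta$-corrections (which is why the paper's own Lemma \ref{coord} has to argue indirectly for $\gamma^{i}\beta_{i}$), and in verifying the $\Omega^{+}(z)\Omega^{-}(w)$ OPE, which does require the Wick combinatorics you describe. With that relocation of the difficulty, your argument goes through.
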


\section{The Logarithmic Chiral de Rham Complex} We now construct the main object of study of this note, namely the log chiral de Rham complex associated to a \emph{log pair} $(X,D)$ - a smooth variety $X$ and a simple normal crossings divisor $D$. We let $r$ denote the number of irreducible components $D_{i}$. We write $X^{*}:=X\setminus D$ throughout, and denote the inclusion of $X^{*}$ into $X$ as $j$. \subsection{Formal local models} Following the strategy of the construction of $\Omega^{ch}_{X}$, we first construct the corresponding object on a formal $d$-dimensional disc equipped with a simple normal crossings divisor. Let $\Delta^{d}$ be the formal $d$-disc with coordinates $\gamma^{i}$. Choose some $r\leq d$ and let $D_{r}$ denote the divisor $$\big\{\gamma^{1}...\,\gamma^{r}=0\big\}\subset\Delta^{d}.$$ Let $j$ denote the inclusion of the open part $$j:\Delta^{d}\setminus D_{r}\longrightarrow \Delta^{d}.$$

\begin{definition} We define the vertex algebra $$\Omega^{ch}_{\Delta^{d}}(\operatorname{log}D_{r})\subset j_{*}\Omega^{ch}_{\Delta^{d}\setminus D_{r}}$$ to be the subalgebra generated by the vectors $$\big\{\gamma^{j},\beta_{j},c^ {j},b_{j}\big\}_{j>r}\bigcup \big\{\gamma^{i},\frac{\partial \gamma^{i}}{\gamma^{i}},\gamma^{i}\beta_{i},\frac{c^{i}}{\gamma^{i}}, \gamma^{i}b^{i}\big\}_{i\leq r}.$$\end{definition}

 We will say that a section of $ j_{*}\Omega^{ch}_{\Delta^{d}\setminus D_{r}}$ is \emph{logarithmic relative to} $D_{r}$ if it lies in $\Omega^{ch}_{\Delta^{d}}(\operatorname{log}D_{r})$ and abbreviate this to just \emph{logarithmic} when context is clear. 

\begin{lemma}\label{locsym} For any $D_{r}\subset\Delta^{d}$, the sections $L,J,G,Q$ are all logarithmic.\end{lemma}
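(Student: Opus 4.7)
My plan is to split each of the global vectors $L, J, Q, G$ into its \emph{interior} part (summing over $j > r$) and its \emph{boundary} part (summing over $i \leq r$), and treat the two contributions separately. The interior part involves only the vectors $\gamma^j, \beta_j, c^j, b_j$ with $j > r$, which are generators of $\Omega^{ch}_{\Delta^{d}}(\operatorname{log}D_r)$ by definition, so the interior contribution lies in the logarithmic subalgebra with nothing to check. The whole point is therefore the boundary part: for each fixed $i \leq r$, I must rewrite each summand of $L, J, Q, G$ as a vertex-algebraic polynomial in the generators $\bigl\{\gamma^i,\;\partial\gamma^i/\gamma^i,\;\gamma^i\beta_i,\;c^i/\gamma^i,\;\gamma^i b_i\bigr\}$.

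The key observation is that in every term appearing in $L, J, Q, G$ the naked $\beta_i$ and $b_i$ come paired with a factor that cancels the $\gamma^i$ in the denominator of a logarithmic generator. Concretely, the substitutions I would use are
$$\partial\gamma^i\,\beta_i = \tfrac{\partial\gamma^i}{\gamma^i}\cdot(\gamma^i\beta_i),\quad \partial\gamma^i\,b_i = \tfrac{\partial\gamma^i}{\gamma^i}\cdot(\gamma^i b_i),\quad c^i b_i = \tfrac{c^i}{\gamma^i}\cdot(\gamma^i b_i),\quad \beta_i c^i = (\gamma^i\beta_i)\cdot\tfrac{c^i}{\gamma^i}.$$
The only term that requires an extra step is $\partial c^i\cdot b_i$ inside $L$: applying Leibniz to $c^i = \gamma^i\cdot(c^i/\gamma^i)$ gives
$$\partial c^i = \tfrac{\partial\gamma^i}{\gamma^i}\cdot c^i + \gamma^i\cdot\partial\!\bigl(\tfrac{c^i}{\gamma^i}\bigr),$$
and multiplying by $b_i$ yields $\tfrac{\partial\gamma^i}{\gamma^i}\cdot\tfrac{c^i}{\gamma^i}\cdot(\gamma^i b_i) + \partial(\tfrac{c^i}{\gamma^i})\cdot(\gamma^i b_i)$, both summands of which are built from logarithmic generators.

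The one place one has to be careful is that the identities above are, \emph{a priori}, only classical; they live in the commutative associated graded rather than the vertex algebra itself. I would then upgrade each identity to a normal-ordered identity in $j_{*}\Omega^{ch}_{\Delta^{d}\setminus D_r}$ by direct computation, using that the only singular OPEs in the $bc\beta\gamma$-system are the basic $\beta$-$\gamma$ and $b$-$c$ contractions. I expect this bookkeeping — tracking the double poles that arise when rearranging normal orderings in, for instance, $\partial\gamma^i\beta_i$ versus $(\partial\gamma^i/\gamma^i)(\gamma^i\beta_i)$ — to be the only real obstacle. However, since $\Omega^{ch}_{\Delta^d}(\operatorname{log}D_r)$ is by construction closed under normal-ordered product and $\partial$, any correction term produced by such a rearrangement is automatically a polynomial in the logarithmic generators, so it poses no genuine obstruction: the corrections simply modify the explicit formula without taking us outside the subalgebra.
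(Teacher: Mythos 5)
Your overall strategy is the same as the paper's: the interior summands are trivially logarithmic, and each boundary summand is to be rewritten as a normally ordered product of the listed generators, with the quantum corrections under control. The paper is slightly more economical — it only treats $Q$ and $G$ explicitly (via $\beta_jc^j=(\gamma^j\beta_j)(c^j/\gamma^j)+\text{correction}$ and $\partial\gamma^jb_j=(\frac{\partial\gamma^j}{\gamma^j})(\gamma^jb_j)$, the latter having no correction since only mutually local fields are involved), and then obtains $L=Q_0G$ and $J=Q_1G$ for free from closure of logarithmic sections under the vertex operations. Your head-on treatment of all four vectors, including the Leibniz manipulation of $\partial c^i\, b_i$, is more work but is a viable alternative.

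The one genuine flaw is in your last paragraph. You assert that any normal-ordering correction arising when comparing, say, $(\gamma^i\beta_i)_{(-1)}(c^i/\gamma^i)$ with $\beta_ic^i$ is ``automatically a polynomial in the logarithmic generators'' because the subalgebra is closed under normal-ordered product and $\partial$. That inference is backwards: closure guarantees that the \emph{left-hand side} $(\gamma^i\beta_i)(c^i/\gamma^i)$ is logarithmic, but says nothing a priori about the individual contraction terms appearing in its Wick/Borcherds expansion — and it is precisely the logarithmicity of those correction terms that you need in order to transfer logarithmicity from the product of generators to $\beta_ic^i$ itself. If a correction term failed to lie in the subalgebra, your argument would break. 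The gap is easily repaired, but only by doing the computation you say you expect to be mere bookkeeping: the $\beta$--$\gamma^{-1}$ contraction in $(\gamma^i\beta_i)(c^i/\gamma^i)$ produces a term proportional to $(\frac{\partial\gamma^i}{\gamma^i})(\frac{c^i}{\gamma^i})$, which is visibly a product of the generators, and similarly for the corrections in $\partial\gamma^i\beta_i=(\frac{\partial\gamma^i}{\gamma^i})(\gamma^i\beta_i)+\cdots$. You should carry out (or at least exhibit) these contractions rather than appeal to an automatic principle that does not exist.
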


\begin{proof}
 We have $$Q=\sum_{i}\beta_{i}c^{i}$$ and so it suffices to show that each $\beta_{j}c^{j}$ is logarthmic. If $j>r$ this is obvious. For $j\leq r$ we can compute from Borcherds' formula that $$(\gamma^{j}\beta_{j})\big(\frac{c^{j}}{\gamma^{j}}\big)=\gamma^{j}\beta^{j}-\big(\frac{\partial \gamma^{j}}{\gamma^{j}}\big)\big(\frac{c^{j}}{\gamma^{j}}\big),$$ which we recognise as logarithmic. We have $$G=\sum_{i}\partial \gamma^{i}b_{i}$$ and similarly to above we need only deal with summands $\partial \gamma^{j}b_{j}$ with $j\leq r$. Then we may simply write this as $\big(\frac{\partial \gamma^{j}}{\gamma^{j}}\big)\big(\gamma^{j}b_{j}\big)$, where there are no extra terms in the normally ordered product as all relevant monomials generate mutually local fields. Now we note that $Q_{0}G=L$ and $Q_{1}G=J$, and logarithmic sections are by definition closed under the vertex operations, whence we are done.\end{proof}

\subsection{Globalisation}Recall we have fixed a log pair $(X,D)$ and we wish to construct a vertex subalgebra of $j_{*}\Omega^{ch}_{X^{*}}$.  In order to do this we must study the effect of coordinate transformations on our formal local model. Of course, we must restrict only to coordinate transformations which preserve the divisor $D_{r}\subset\Delta^{d}$, as otherwise there is no chance of invariance.

\begin{definition} We denote by $G_{d,r}$ the subgroup of formal coordinate transformations preserving $D_{r}$. \end{definition}

\begin{lemma}\label{coord} Formulae (3.17a)-(3.17d) of \cite{MSV} define an action of $G_{d,r}$ on the vertex algebra $\Omega^{ch}_{\Delta^{d}}(\operatorname{log}D_{r})$.\end{lemma}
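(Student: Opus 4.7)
The plan is to leverage the MSV theorem already quoted: the formulas (3.17a)--(3.17d) are known to define an action of the full group $G_d$ of formal coordinate transformations on $\Omega^{ch}_{\Delta^d}$. Since $G_{d,r}$ preserves the divisor $D_r$ (and hence its open complement), the MSV action extends termwise to an action of $G_{d,r}$ on the localised vertex algebra $j_*\Omega^{ch}_{\Delta^d\setminus D_r}$. The only content of the lemma is therefore that this action stabilises the subalgebra $\Omega^{ch}_{\Delta^d}(\operatorname{log}D_r)$, and since the vertex operations commute with the group action it is enough to check this on the distinguished list of generators.

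The verification splits according to whether a generator has index $i\le r$ or $j>r$. Any element of $G_{d,r}$ sends $\gamma^i$ (for $i\le r$) to $\gamma^{\sigma(i)}u^i$ with $\sigma$ a permutation of $\{1,\ldots,r\}$ and $u^i$ a formal unit in all variables, while $\gamma^j$ for $j>r$ can map to an arbitrary formal function subject to the overall Jacobian being invertible. I would first handle the purely commutative, conformal-weight-zero shadow (i.e.\ the usual transformation law for log differentials and log vector fields): for $i\le r$ the classical images of $\gamma^i$, $\partial\gamma^i/\gamma^i$, $\gamma^i\beta_i$, $c^i/\gamma^i$, $\gamma^ib_i$ are expressible in terms of $\gamma^{\sigma(i)}, \,d\log\gamma^{\sigma(i)}+d\log u^i$, the tangent-to-$D_r$ vector field $\gamma^{\sigma(i)}\partial_{\sigma(i)}$ plus a logarithmic combination, and their fermionic counterparts, all of which are manifestly in the log subalgebra. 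I would then confirm that the quantum corrections appearing in (3.17a)--(3.17d) — which are built out of derivatives of $g^i$ and the fields $\beta,b,c$ — also lie in the log subalgebra: the key observation is that whenever a $\gamma^i$ (for $i\le r$) appears in a denominator arising from inversion of the Jacobian in the formula, it is always accompanied by a $\partial g^i$ (or a $c$--$\beta$ analogue), which, by the product rule $\partial g^i = (\partial\gamma^{\sigma(i)})u^i + \gamma^{\sigma(i)}\partial u^i$, supplies the needed $\gamma^{\sigma(i)}$ factor to produce $\partial\gamma^{\sigma(i)}/\gamma^{\sigma(i)}$ times a unit, plus regular terms.

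A cleaner, more conceptual route, which I would pursue in parallel, is to pass to the Lie algebra $\mathfrak{g}_{d,r}$ of formal vector fields tangent to $D_r$ and observe that this is spanned (over formal power series) by $\{\gamma^i\partial_i\}_{i\le r}\cup\{\partial_j\}_{j>r}$. The MSV lift of a vector field $V=\sum f^k\partial_k$ to $\Omega^{ch}_{\Delta^d}$ has the shape $\sum f^k\beta_k + (\text{second-order correction in }\partial f,\,b,\,c)$. For the generators of $\mathfrak{g}_{d,r}$ each summand can be rewritten using the identities $f^i\beta_i = (f^i/\gamma^i)(\gamma^i\beta_i)$ when $i\le r$ with $f^i\in(\gamma^i)$, and analogously for the fermionic corrections after pairing each $b_i$ with a companion $\gamma^i$; this exhibits $\hat V$ itself as a log section. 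Since the log subalgebra is closed under the vertex operations, the operators $\hat V_{(n)}$ preserve it, and exponentiating gives the desired $G_{d,r}$-invariance.

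The main obstacle is purely bookkeeping: tracking the normal-ordering corrections in (3.17) and checking that every apparent pole in $\gamma^i$ at $i\le r$ is cancelled by an accompanying factor supplied by tangency of the coordinate change (or of the vector field) to $D_r$. No genuinely new vertex-algebra input is required beyond Borcherds' identity and the factorisation $f^i\in(\gamma^i)$ that defines $\mathfrak{g}_{d,r}$.
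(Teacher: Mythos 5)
Your skeleton coincides with the paper's: the MSV formulae already act on the localised algebra, so the only content is stability of the log subalgebra, and it suffices to check this on the listed generators because the action is by vertex automorphisms and the subalgebra is closed under the vertex operations. Your treatment of $\gamma^{i}$, $\partial\gamma^{i}/\gamma^{i}$, $c^{i}/\gamma^{i}$ and $\gamma^{i}b_{i}$ via the classical transformation laws of log one-forms and log vector fields is exactly the paper's argument. The divergence is in how you handle $\gamma^{i}\beta_{i}$, whose transformation law carries the quadratic fermionic correction. You propose either a direct computation (tracking cancellation of each pole in $\gamma^{i}$ against a compensating factor forced by tangency of the coordinate change to $D_{r}$) or a Lie-algebra lift followed by exponentiation. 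The paper instead sidesteps the correction term entirely: $c^{i}b_{i}=(c^{i}/\gamma^{i})(\gamma^{i}b_{i})$ transforms logarithmically by the cases already done; $Q_{0}$ is coordinate-invariant by MSV and preserves log sections because $Q$ itself is logarithmic (Lemma \ref{locsym}); hence $\gamma^{i}\beta_{i}=Q_{0}(\gamma^{i}b_{i})-c^{i}b_{i}$ transforms logarithmically. That trick buys a complete three-line argument where your first route is left as ``bookkeeping'' --- and the bookkeeping is not entirely innocent, since normal ordering produces extra quantum terms (compare the $(n-1)\partial c/\gamma$ term that surfaces in the paper's later proof of topological symmetry), each of which needs its own verification of logarithmicity; this is precisely the ``notational bother'' the paper declines to undertake. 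Your second route has an additional wrinkle: $G_{d,r}$ is not connected, since its linear part contains the permutations of the first $r$ coordinates, so exponentiating the tangent Lie algebra only reaches the identity component; the permutation part is harmless (it permutes the generators of the log subalgebra) but must be treated separately. So: same decomposition of the problem, correct mechanism identified for the delicate generator, but a genuinely different and less economical resolution of it, with the hard step sketched rather than closed.
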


\begin{proof} Fix a coordinate transformation $$\tilde{\gamma}^{i}=g^{i}(\gamma^{1},...,\gamma^{d});\,\,\gamma^{i}=f^{i}(\tilde{\gamma}^{1},...,\tilde{\gamma}^{d}).$$ We will use tildes to denote the images of the transformed fields, so that $\beta^{j}$ transforms to $\tilde{\beta}^{j}$ etc. 

We first turn to the claim of the lemma for fields $\frac{\partial\gamma}{\gamma},\frac{c}{\gamma}$ and $\gamma b$. According to \cite{MSV} the odd fields $c^{i}$ and $b^{i}$ transform exactly as differential forms and vector fields respectively, and the claim that $G_{r,d}$ maps $\gamma^{i}b_{i}$ and $\frac{c^{i}}{\gamma^{i}}$ into the subspace  $\Omega^{ch}_{\Delta^{d}}(\operatorname{log}D_{r})$ follows immediately from the fact that an automorphism preserving $D_{r}$ preserves log one forms (\emph{resp.} vector fields) along $D_{r}$. An identical argument holds for the fields $\frac{\partial\gamma^{i}}{\gamma^{i}}$.  

We are left to deal with the fields $\beta^{i}$, which have a complicated transformation property depending crucially on a correction term which is quadratic in the fermionic fields. Explicitly we have $$\tilde{\beta}^{i}=\beta^{j}\frac{\partial f^{j}}{\partial\tilde{\gamma}^{i}}(g^{1},...,g^{d})+\frac{\partial^{2}f^{k}}{\partial\tilde{\gamma}^{i}\partial\tilde{\gamma}^{l}}(g^{1},...,g^{d})\frac{\partial g^{l}}{\partial\gamma^{r}}c^{r}b^{k}.$$

 It would be something of a notational bother to check directly the claim for the fields $\gamma^{i}\beta_{i}\in \Omega^{ch}_{\Delta^{d}}(\operatorname{log}D_{r})$ so we instead argue as follows.

The expression $c^{i}b_{i}=(\frac{c^{i}}{\gamma^{i}})(\gamma^{i}b_{i})$ makes clear that the vectors $c^{i}b_{i}$ are transformed to logarithmic fields under the action of $G_{d,r}$ as both $\frac{c^{i}}{\gamma^{i}}$ and $\gamma^{i}b_{i}$ map to logarithmic elements by above - indeed $G_{d,r}$ acts by vertex automorphisms and logarthmic elements are closed under the vertex operations by definition. Now recall that the results of \cite{MSV} imply that the operator $Q_{0}$ is invariant under all coordinate transformations, even though the field $Q(z)$ is not. Recall also that lemma \ref{locsym} tells us that $Q$ is logarithmic and so $Q_{0}$ preserves logarithmic elements as they are closed under the vertex operations. Finally we conclude from the expression $$Q_{0}(\gamma^{i}b_{i})-c^{i}b_{i}=\gamma^{i}\beta_{i}.$$ Indeed we have shown that the left hand side maps to a logarithmic form by invariance of the operator $Q_{0}$ and the fact that both $\gamma^{i}b_{i}$ and $\gamma^{i}\beta_{i}$ do. It follows that the right hand side must as well.  \end{proof}

By a \emph{local section} of a sheaf $\mathcal{F}$ on $X$ we will mean a pair consisting of an open set $U$ and a section $s\in \Gamma(U,\mathcal{F})$. We will abuse notation, forgetting $U$ and simply write $s\in\mathcal{F}$ to denote this.

\begin{definition}We say that a local section $s\in j_{*}\Omega^{ch}_{X^{*}}$ is \emph{logarithmic with respect to $D$} at a point $p\in X$ if for some choice of coordinates $\{\gamma^{i}\}$ at $p$, with respect to which $D$ is given by $\{\gamma^{1}...\,\gamma^{r}=0\}$, the image of $s$ inside $j_{*}\Omega^{ch}_{\Delta^{d}\setminus D_{r}}$ lands inside the subalgebra $\Omega^{ch}_{\Delta^{d}}(\operatorname{log}D_{r})$. \end{definition}

\begin{remark} Note that this condition is vacuous if $p\in X^{*}$. \end{remark}

\begin{lemma} The condition that a local section at $p$ is logarthmic with respect to $D$ is independent of the choice of local coordinates at $p$. \end{lemma}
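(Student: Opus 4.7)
The plan is to reduce the statement directly to Lemma \ref{coord}. Suppose $\{\gamma^i\}$ and $\{\tilde\gamma^i\}$ are two systems of formal coordinates at $p$, each of which cuts out $D$ as the vanishing locus of its first $r$ coordinates. Then the transition $\tilde\gamma = g(\gamma)$ is a formal automorphism of $\Delta^d$ that sends the divisor $D_r\subset\Delta^d$ to itself as a subset, and so belongs to the group $G_{d,r}$. By Lemma \ref{coord} the induced action on $j_{*}\Omega^{ch}_{\Delta^d\setminus D_r}$ preserves the subalgebra $\Omega^{ch}_{\Delta^d}(\operatorname{log}D_r)$; this is precisely the coordinate-invariance we want.

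There is one small bookkeeping point to address first. The two coordinate systems can a priori index the irreducible components of $D$ through $p$ differently, so $g$ need only preserve $D_r$ as a set, permuting its smooth components $\{\gamma^i=0\}_{i\leq r}$ by some $\sigma\in S_r$. This is harmless: the generating set for $\Omega^{ch}_{\Delta^d}(\operatorname{log}D_r)$ is written uniformly over indices $i\leq r$, so the subalgebra is manifestly stable under the natural $S_r$ action permuting the first $r$ coordinate directions. One can thus first apply such a permutation to align the labels of the components, and then invoke Lemma \ref{coord} for the residual transformation, which is a genuine element of the stabiliser of the ordered tuple of components.

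The only place where one might worry about a subtlety is in the transformation rule for the $\beta^i$ fields, since this includes a fermionic correction quadratic in the $c$'s and $b$'s that need not, term by term, be logarithmic. However, this is exactly the difficulty that has already been packaged into Lemma \ref{coord}, where it is handled via the coordinate independence of the operator $Q_0$ together with the identity $Q_0(\gamma^i b_i) - c^i b_i = \gamma^i\beta_i$. Consequently no work beyond citing Lemma \ref{coord} is required, and the main (and essentially only) step in the proof is the verification that the transition function between two $D$-adapted coordinate systems lies, up to a permutation of components, in $G_{d,r}$.
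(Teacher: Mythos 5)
Your proposal is correct and follows essentially the same route as the paper, which disposes of the lemma by citing the invariance of $\Omega^{ch}_{\Delta^{d}}(\operatorname{log}D_{r})$ under the group $G_{d,r}$ of divisor-preserving coordinate changes (the paper's proof cites Lemma \ref{locsym}, but this is evidently a slip for Lemma \ref{coord}, which is the lemma you correctly invoke). Your additional remark about permutations of the components and the $S_{r}$-stability of the generating set is a harmless piece of bookkeeping that the paper leaves implicit.
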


\begin{proof} This is the content of lemma \ref{locsym} above. \end{proof}

\label{maindef}\begin{definition} We say that a local section $s$ is \emph{logarithmic with respect to $D$} if it is logarthmic at all points $p\in X$. The subsheaf of $j_{*}\Omega^{ch}_{X^{*}}$ whose local sections are the logarithmic sections with respect to $D$ is denoted $\Omega^{ch}_{X}(\operatorname{log}D)$ and referred to as the log chiral de Rham complex. \end{definition}

\begin{lemma} The subsheaf $\Omega^{ch}_{X}(\operatorname{log}D)$ is closed under the vertex operations, and so forms a vertex subalgebra.\end{lemma}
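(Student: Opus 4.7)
The claim is purely local on $X$, so the plan is to reduce it to the analogous statement for the formal local models $\Omega^{ch}_{\Delta^d}(\operatorname{log}D_r)$, where it is essentially immediate from the definition.

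First I would fix a point $p\in X$ and a choice of formal coordinates $\{\gamma^i\}$ at $p$ adapted to $D$, so that $D$ is cut out formally by $\gamma^1\cdots\gamma^r=0$. This gives an identification of the formal completion of $j_*\Omega^{ch}_{X^*}$ at $p$ with $j_*\Omega^{ch}_{\Delta^d\setminus D_r}$, compatible with vertex operations on either side (this is part of the MSV construction of $\Omega^{ch}_X$). By definition of the logarithmic condition at $p$, a local section $s$ of $j_*\Omega^{ch}_{X^*}$ is logarithmic at $p$ if and only if its image in $j_*\Omega^{ch}_{\Delta^d\setminus D_r}$ lies in $\Omega^{ch}_{\Delta^d}(\operatorname{log}D_r)$.

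Next I would invoke the fact that $\Omega^{ch}_{\Delta^d}(\operatorname{log}D_r)$ was defined precisely as the \emph{subalgebra} generated by the list of vectors $\{\gamma^j,\beta_j,c^j,b_j\}_{j>r}\cup\{\gamma^i,\partial\gamma^i/\gamma^i,\gamma^i\beta_i,c^i/\gamma^i,\gamma^ib_i\}_{i\leq r}$, hence is by construction closed under the $n$-th vertex products for all $n\in\mathbf{Z}$. Consequently, if $s,t$ are logarithmic at $p$, so are all the vectors $s_{(n)}t$, because their images land in $\Omega^{ch}_{\Delta^d}(\operatorname{log}D_r)$ at $p$.

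Since the vertex operations on $j_*\Omega^{ch}_{X^*}$ are computed stalkwise and compatibly with the above formal identifications, closure at every point $p\in X$ is equivalent to closure of the subsheaf globally. The only point that required any work was the coordinate-independence of the notion of being logarithmic at $p$, but this has just been established (Lemma \ref{locsym} and the subsequent lemma); with that in hand there is no obstacle, and the argument is a one-line reduction to the local definition. In particular the unit vector $\operatorname{vac}$ is clearly logarithmic and $\partial$ preserves the generating set up to logarithmic combinations (e.g.\ $\partial\gamma^i=\gamma^i\cdot(\partial\gamma^i/\gamma^i)$ for $i\leq r$), so all the axioms of a vertex subalgebra are satisfied.
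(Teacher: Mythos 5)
Your argument is correct and is essentially the paper's own proof: the paper likewise observes that closure is immediate because $\Omega^{ch}_{\Delta^{d}}(\operatorname{log}D_{r})$ is by definition a subalgebra of $j_{*}\Omega^{ch}_{\Delta^{d}\setminus D_{r}}$ and the logarithmic condition is local. Your version merely spells out the pointwise reduction in more detail.
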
 \begin{proof} This is immediate from construction as $\Omega^{ch}_{\Delta^{d}}(\operatorname{log}D_{r})\subset j_{*}\Omega^{ch}_{\Delta^{d}\setminus D_{r}}$ is defined to be a subalgebra, and the condition that a section is logarithmic is local. \end{proof}

\begin{remark} Notice that, unlike in the non-chiral case, we do not have $\Omega^{ch}_{X}\subset\Omega^{ch}_{X}(\operatorname{log}D)$ on account of the presence of tangent directions corresponding to the vectors $\beta_{i}$ and $b_{i}$. \end{remark}

\begin{lemma} The restriction of the conformal grading on $j_{*}\Omega^{ch}_{X^{*}}$ to $\Omega^{ch}_{X}(\operatorname{log}D)$ has weight $0$ subspace the algebra $\Omega_{X}(\operatorname{log}D)$. \end{lemma}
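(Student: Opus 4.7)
The plan is to reduce to the formal local model and combine a standard fact about $\Omega^{ch}$ with strong generation.

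To begin, note that $\Omega^{ch}_X(\operatorname{log}D)\subset j_*\Omega^{ch}_{X^*}$ is a graded inclusion. By the standard computation recalled in \cite{MSV}, the weight-$0$ subspace of $\Omega^{ch}_{X^*}$ is the ordinary de Rham algebra $\Omega_{X^*}$; applying $j_*$, the weight-$0$ subspace of $j_*\Omega^{ch}_{X^*}$ is $j_*\Omega_{X^*}$. Hence the weight-$0$ subspace of $\Omega^{ch}_X(\operatorname{log}D)$ equals $\Omega^{ch}_X(\operatorname{log}D)\cap j_*\Omega_{X^*}$, and the claim is local on $X$. We may therefore work in the formal model $\Omega^{ch}_{\Delta^d}(\operatorname{log}D_r)\subset j_*\Omega^{ch}_{\Delta^d\setminus D_r}$ used in the definition.

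Inspecting the list of generators from Definition \ref{maindef}, those of conformal weight $0$ are exactly $\gamma^i$ for all $i$, $c^j$ for $j>r$, and $c^i/\gamma^i$ for $i\leq r$. These coincide with the standard generators of the commutative superalgebra $\Omega_{\Delta^d}(\operatorname{log}D_r)$, and on weight-$0$ elements the $(-1)$ product coincides with the commutative multiplication inherited from $j_*\Omega_{\Delta^d\setminus D_r}$. This immediately gives $\Omega_{\Delta^d}(\operatorname{log}D_r)\subset\Omega^{ch}_{\Delta^d}(\operatorname{log}D_r)_0$.

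For the reverse inclusion I would invoke the reconstruction theorem: since our generators are pairwise mutually local, being sections of the ambient vertex algebra $j_*\Omega^{ch}_{\Delta^d\setminus D_r}$, the subalgebra they generate is linearly spanned by normal-ordered monomials $(\partial^{n_1}s_{i_1})(\partial^{n_2}s_{i_2})\cdots(\partial^{n_k}s_{i_k})$ with the $s_{i_l}$ drawn from the listed generators and $n_l\geq 0$. Such a monomial is homogeneous of conformal weight $\sum_l(\Delta(s_{i_l})+n_l)$, and vanishing weight forces $n_l=0$ and $\Delta(s_{i_l})=0$ for every $l$. A spanning set for the weight-$0$ subspace is thus exhausted by products of our weight-$0$ generators, all of which lie in $\Omega_{\Delta^d}(\operatorname{log}D_r)$.

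The one point requiring care is ruling out weight-$0$ elements that might sneak in via products $v_{(n)}w$ with $n\geq 0$ of higher-weight generators. This is absorbed by strong generation: after applying any such product, the result is rewritable as a normal-ordered monomial of the form above, and the weight bookkeeping is unchanged. Assembling the two inclusions locally and sheafifying then gives the lemma.
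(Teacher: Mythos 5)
Your proposal is correct and follows the same route the paper intends: the paper's proof is simply ``immediate from the local construction,'' and your argument is a careful unpacking of exactly that --- reduce to the formal model, note that the listed generators of weight $0$ are $\gamma^{i}$, $c^{j}$ for $j>r$ and $c^{i}/\gamma^{i}$ for $i\leq r$, and use strong generation plus the non-negativity of the weights of all generators to see nothing else lands in weight $0$. (Minor quibble: the generator list you inspect is in the definition of $\Omega^{ch}_{\Delta^{d}}(\operatorname{log}D_{r})$, not the globalisation definition you cite.)
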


\begin{proof} This is immediate from the local construction. \end{proof}

\begin{lemma} The inclusion of the conformal weight $0$ subspace is a quasi-isomorphism: $$\big(\Omega_{X}(\operatorname{log}D),Q_{0}\big)\longrightarrow\big(\Omega^{ch}_{X}(\operatorname{log}D),Q_{0}\big).$$ In particular the hypercohomology of $(\Omega^{ch}_{X}(\operatorname{log}D),Q_{0})$ is isomorphic to the cohomology of the open piece $X^{*}$.\end{lemma}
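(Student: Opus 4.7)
The plan is to adapt the homotopy argument from the non-logarithmic case. By Lemma~\ref{locsym} the fields $L$ and $G$ are logarithmic sections, so their modes preserve the subalgebra $\Omega^{ch}_X(\operatorname{log}D)$: this holds because logarithmic sections form a vertex subalgebra and so are closed under all $n$-th products, and each mode $v_n$ is a particular such product. Quasi-isomorphism of sheaves of complexes is a local question, so we may work in a coordinate chart where $G = \partial\gamma^i b_i$ is explicitly defined (and logarithmic by Lemma~\ref{locsym}), even in the absence of a global volume form.

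From the topological OPE recalled in Section 2, $Q(z)G(w) \sim L(w)/(z-w) + J(w)/(z-w)^2 + d/(z-w)^3$, one has $[Q_0, G_0] = L_0$, and $L_0$ acts as multiplication by the conformal weight, hence is invertible on the strictly positive weight part. Therefore $h := G_0 L_0^{-1}$ is a chain homotopy on the positive weight subcomplex with $[Q_0, h] = 1$, contracting it to zero. The inclusion of the conformal weight $0$ subspace is thus a quasi-isomorphism, and the restriction of $Q_0$ to that subspace is the de Rham differential on $\Omega_X(\operatorname{log}D)$: for $i \leq r$ one has $Q_0\gamma^i = c^i = \gamma^i \cdot (c^i/\gamma^i)$, corresponding to $d\gamma^i = \gamma^i\,d\operatorname{log}\gamma^i$, and the derivation property of $Q_0$ together with $Q_0(c^i/\gamma^i) = 0$ propagates this to the full log de Rham differential. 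The subtlest point is justifying that $G_0$ and $L_0$ preserve the log subalgebra, for which the general principle above suffices; one could alternatively filter by conformal weight and verify Koszul acyclicity directly on generators, but the homotopy route is much cleaner.

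For the second assertion, combine the above quasi-isomorphism with Deligne's theorem \cite{Del}: the natural map $(\Omega_X^\bullet(\operatorname{log}D), d) \to Rj_*\mathbf{C}_{X^*}$ is a quasi-isomorphism, giving that the hypercohomology $\mathbf{H}^*(X, \Omega_X^\bullet(\operatorname{log}D))$ is isomorphic to $H^*(X^*, \mathbf{C})$. The just-established quasi-isomorphism transports this identification to $\Omega^{ch}_X(\operatorname{log}D)$.
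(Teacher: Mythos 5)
Your proof is correct and follows essentially the same route as the paper: the contracting homotopy $G_{0}$ with $[Q_{0},G_{0}]=L_{0}$ kills the positive-weight part, and Deligne's theorem on the log de Rham complex handles the second assertion. The extra care you take (working locally so that $G$ exists without a global volume form, checking that the modes preserve the log subalgebra, and identifying $Q_{0}$ with $d$ in weight $0$) fills in details the paper leaves implicit but does not change the argument.
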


\begin{proof} The first part is a standard fact about modules for the topological algebra, $G_{0}$ provides a homotopy contracting to the conformal weight $0$ subspace as we have $[G_{0},Q_{0}]=L_{0}$. 

 For the second we recall that the hypercohomology of the complex of log de Rham forms on $X$ computes the cohomology of $X^{*}$, cf. \cite{Del}.\end{proof}

\subsection{Topological symmetry of  $\Omega^{ch}_{X}(\operatorname{log}D)$} Recall from above that a choice of volume form on $X$ endows $\Omega^{ch}_{X}$ with the structure of an extended topological vertex algebra at rank $d$. 

\begin{remark} In keeping with our algebraic context, volume form is here meant to be understood as a global section $\operatorname{vol}$ of the sheaf $\Omega^{d}_{X}$ on the algebraic variety $X$ which induces an equivalence: $$\operatorname{vol}:\mathcal{O}\xrightarrow{\operatorname{\sim}}\Omega^{d}_{X}.$$\end{remark}
\begin{theorem}The choice of a volume form on $X^{*}$ endows $\Omega^{ch}_{X}(\operatorname{log}D)$ with a topological structure. If $D$ is further anticanonical then the choice of a volume form on $X^{*}$ with log poles along $D$ endows $\Omega^{ch}_{X}(\operatorname{log}D)$ with an extended topological structure.  \end{theorem}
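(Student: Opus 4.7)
My approach is to reduce the statement to a local computation at points of $D$, leveraging Odake's theorem applied to the open part $X^{*}$ where the volume form lives. Fix a volume form $\omega$ on $X^{*}$ (satisfying the log pole hypothesis in the anticanonical case). By Odake, $\omega$ endows $\Omega^{ch}_{X^{*}}$ with the required (extended) topological structure, producing global sections --- the topological generators $L,J,Q,G$ and, in the extended case, two further \emph{top-form} generators corresponding to $\omega$ --- that provide the map from $\mathbf{top}\{d\}$ or $\mathbf{top}\{d\}^{!}$. Pushing forward by $j$ these become sections of $j_{*}\Omega^{ch}_{X^{*}}$. Since $\Omega^{ch}_{X}(\operatorname{log}D)\subseteq j_{*}\Omega^{ch}_{X^{*}}$ is closed under the vertex operations, everything reduces to the local claim that each generator is logarithmic at every point of $D$.

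Fix such a point $p$ with formal coordinates $\gamma^{1},\ldots,\gamma^{d}$ in which $D$ is cut out by $\gamma^{1}\cdots\gamma^{r}$, and write $\omega=f\cdot d\gamma^{1}\wedge\cdots\wedge d\gamma^{d}$. Since $\omega$ is non-vanishing algebraic on $X^{*}$ and meromorphic along $D$, the coefficient $f$ is a meromorphic unit on the open part of a neighbourhood of $p$, and we may write $f=u\cdot\prod_{i=1}^{r}(\gamma^{i})^{a_{i}}$ with $u$ regular and non-vanishing at $p$ and $a_{i}\in\mathbf{Z}$. The anticanonical hypothesis corresponds to $a_{i}=-1$ for all $i\leq r$. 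The key elementary observation for the sequel is that
$$\partial\log f=\sum_{i=1}^{r}a_{i}\frac{\partial\gamma^{i}}{\gamma^{i}}+\frac{\partial u}{u}$$
is a logarithmic section: the first summand is a sum of listed generators and the second is regular because $u$ is a unit at $p$.

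For the topological generators $L_{\omega},J_{\omega},Q_{\omega},G_{\omega}$, I use the MSV transformation rules (3.17a-d) to transport Odake's formulas, which are valid in coordinates volumic for $\omega$, to our $D$-adapted coordinates $\gamma$. Because the Jacobian of the passage to volumic coordinates is exactly $f$, any corrections to the standard MSV expressions involve $f$ only through $\partial\log f$ and through logarithmic combinations of the listed generators. Combined with Lemma \ref{locsym}, which handles the standard parts, this gives the logarithmicity of $L_{\omega},J_{\omega},Q_{\omega},G_{\omega}$. For the extended topological structure, the two additional generators correspond directly to $\omega$, taking the local forms $f\cdot c^{1}c^{2}\cdots c^{d}$ and $f^{-1}\cdot b_{1}b_{2}\cdots b_{d}$ up to sign and normal-ordering conventions. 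Under the log pole hypothesis these become
$$u\cdot\prod_{i\leq r}\frac{c^{i}}{\gamma^{i}}\cdot\prod_{j>r}c^{j}\quad\text{and}\quad u^{-1}\cdot\prod_{i\leq r}(\gamma^{i}b_{i})\cdot\prod_{j>r}b_{j},$$
each a product of listed logarithmic generators with the unit $u^{\pm 1}$, hence visibly logarithmic.

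The principal obstacle is the algebraic bookkeeping needed to transport Odake's formulas through the non-volume-preserving coordinate change: handling the quadratic fermionic correction in the transformation rule for $\beta_{i}$ and the attendant normal orderings, and verifying that the net effect on the topological generators involves $f$ only through logarithmic combinations. With that local computation in hand, each assertion of the theorem follows by direct inspection against the listed generators of $\Omega^{ch}_{X}(\operatorname{log}D)$.
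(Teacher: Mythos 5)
Your overall strategy coincides with the paper's: invoke Odake on $X^{*}$, push the generators into $j_{*}\Omega^{ch}_{X^{*}}$, and reduce to the local claim that they are logarithmic at points of $D$ in $D$-adapted coordinates. The gap is that the one step carrying all the content is asserted rather than proved. You write that ``any corrections to the standard MSV expressions involve $f$ only through $\partial\log f$ and through logarithmic combinations of the listed generators,'' and then in your final paragraph you concede that verifying this is ``the principal obstacle.'' That verification \emph{is} the proof. The paper carries it out: writing (in the one-component case) $\phi_{1}=\gamma^{n}$ etc., the summand $c_{\phi}\beta^{\phi}=(\gamma^{n-1}c)(\gamma^{1-n}\beta)$ is expanded via Borcherds' formula, and the normal-ordering contractions produce
$$c\beta+(n-1)\frac{\partial c}{\gamma}.$$
Note that the correction term $\frac{\partial c}{\gamma}$ is \emph{not} one of the listed generators and is not literally of the form ``$\partial\log f$ times something'': its logarithmicity requires the further identity $\frac{\partial c}{\gamma}=\partial\bigl(\frac{c}{\gamma}\bigr)-\frac{c}{\gamma}\frac{\partial\gamma}{\gamma}$ together with closure of the logarithmic subalgebra under the vertex operations. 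So your heuristic that everything reduces to $\partial\log f$ being logarithmic is too coarse; the actual correction terms arising from the quadratic fermionic piece of the $\beta$-transformation and from normal ordering have to be computed and then individually recognised as logarithmic. Until that computation is done, the claim for $Q$ (the only generator that genuinely needs work, since $G$ is globally regular on $X$ by the MSV formulae and $L,J$ are generated from $Q,G$ via $Q_{0}G=L$, $Q_{1}G=J$) is unestablished.

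Your treatment of the extended case is fine and in fact slightly more explicit than the paper's, which simply observes that a top form with log poles along $D$ lies in $\Omega^{ch}_{X}(\operatorname{log}D)$ by definition; your identification of $fc^{1}\cdots c^{d}$ and $f^{-1}b_{1}\cdots b_{d}$ as products of listed generators with the unit $u^{\pm 1}$ is consistent with that. But the first assertion of the theorem does not follow from what you have written without supplying the normal-ordering computation you deferred.
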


\begin{proof} We first prove that there is topological symmetry as soon there exists a volume form on $X^{*}$. 

The main point is the following, recall that $X^{*}\subset X$  has complement a simple normal crossings divisor $D$. Suppose we have chosen a volume form $\omega$ on $X^{*}$, so that we have vectors $L, G, Q, J$ in $\Omega^{ch}_{X^{*}},$ generating a copy of the topological vertex algebra. Then each of these vectors \emph{automatically} has logarithmic singularities along $D$. It suffices to prove this for the vectors $G,Q$ as they generate the topological vertex algebra. Further, formulae of \cite{MSV} imply that $G$ is globally defined on all of $X$, and so it is only with $Q$ that we need concern ourselves. We give the proof in the case where $D$ has a single component for ease of notation. The general case follows from similar arguments. 

 Now the question of whether $Q$ is logarithmic is local and so we may assume that $X^{*}$ admits global coordinates $\phi:=(\phi_1,...,\phi_d)$ inducing the volume form in the sense that $$\omega:=d\phi_1...d\phi_d.$$ Further, again because the claim is purely local, we may assume that there are global coordinates $z=z_1,z_2,...,z_d$ on $X$ so that $D$ is given by $\{z=0\}$. Note that we \emph{may not} assume that the coordinates $\phi_{j}$ on $X^{*}$ are the restrictions of the $z_i$, indeed this would imply that $\omega$ extended to all of $X$. We can however assume that we have $$(\phi_1,\phi_2,...,\phi_d)=(z^{n},\frac{z_2}{z^{n_2}},...,\frac{z_d}{z^{n_d}}),$$ as we can write any function on $X^{*}$ in the form $\frac{g}{z^{n}}$ with $g$ a function on $X$.

 With respect to these coordinate systems we get sections $c_{z},b^{z},\gamma_{z},\beta^{z}$ of $\Omega^{ch}_{X}$ on all of $X$. We call these the \emph{$z$-vectors}. Similarly we get local sections, defined only over $X^{*}$, denoted $c_{\phi},b^{\phi},\gamma_{\phi},\beta^{\phi},$ which we call these the \emph{$\phi$-vectors}. The equality $$(\phi_1,\phi_2,...,\phi_d)=(z^{n},\frac{z_2}{z^{n_2}},...,\frac{z_d}{z^{n_d}})$$ explains how to obtain a rational expression for the $\phi$-vectors in terms of the $z$-ones. We now dispense with the $z$-superscripts. Henceforth we write $c=c_{z}, b=b^{z}, \gamma=\gamma_{z}$ and $\beta=\beta^{z}$ and write the $\phi$-vectors in terms of these. 

Then by construction we have $$Q=\sum c^{i}_{\phi}\beta^{\phi}_i$$ and we need to see that this is logarithmic \emph{in terms of the $z$-vectors}. Certainly it suffices to show that each of the summands $c^{i}_{\phi}\beta^{\phi}_{i}$ is logarithmic and so we do this. We focus on the first summand as the others are treated identically. We now dispense with the sub/superscript $1$ as we are working at a fixed index. We have $$c_{\phi}\beta^{\phi}=(\gamma^{n-1}c)(\gamma^{1-n}\beta)$$ and we claim that this logarithmic. Noting that $c$ and $\gamma$ are mutually local, we have from Borcherds' formula the equality: $$(c_{-1}\gamma^{n-1})_{-1}=\sum_{i}c_{-1-i}(\gamma^{n-1})_{-1+i}.$$ Upon acting on $\gamma^{1-n}\beta$, only terms with $i\geq 0$ will survive as any annihilation mode of $c$ will kill $\gamma^{1-n}\beta.$ Further, any term $i\geq 2$ will also act trivially on $\gamma^{1-n}\beta$ as it will contain at least two annihilation modes coming from $\gamma^{n-1}$, and there is only one $\beta$ to contract against. So only $i=0,1$ can occur and we find that $$(c_{-1}\gamma^{n-1})_{-1}(\gamma^{1-n}\beta)$$ $$=\Big(c_{-1}(\gamma^{n-1})_{-1}+c_{-2}(\gamma^{n-1})_{0}\Big)(\gamma^{1-n}\beta)$$ $$=\Big(c_{-1}(\gamma^{n-1})_{-1}+c_{-2}(n-1)\gamma^{n-2}_{-1}\gamma_{0}\Big)(\gamma^{1-n}\beta)$$ $$=c\beta+(n-1)\frac{\partial c}{\gamma}.$$ We must show that this is logarithmic. Now we have seen in \ref{locsym} that $c\beta$ is logarithmic and so it suffices to show that $\frac{\partial c}{\gamma}$ is logarithmic. We observe that $$\partial\Big(\frac{c}{\gamma}\Big)-\frac{c\partial{\gamma}}{\gamma^{2}}=\partial\Big(\frac{c}{\gamma}\Big)-\frac{c}{\gamma}\frac{\partial{\gamma}}{\gamma}=\frac{\partial c}{\gamma}.$$ Finally, $\frac{c}{\gamma}$ is logarithmic by definition and thus so too is $\partial\Big(\frac{c}{\gamma}\Big)$, as logarithmic elements are closed under all vertex operations. Further, $\frac{\partial\gamma}{\gamma}$ is also logarithmic by definition and then so too is $\frac{c}{\gamma}\frac{\partial\gamma}{\gamma}$ and the claim is proven.

Now we turn to the claim of \emph{extended topological symmetry}. In fact we will give only a brief account of this. Recall from \cite{Odake} that the extended topological vertex algebra is obtained by adding in the modes of the volume form. In this case we can take a top degree form on $X$ with logarithmic singularities along $D$, which by definition lives inside $\Omega^{ch}_{X}(\operatorname{log}D)$. \end{proof}

\begin{remark} As the account of \cite{Odake} is rather physical, the more mathematically inclined reader is invited to consult the author's short note \cite{Boua}, where details of the extended topological structure on $\Omega^{ch}_{X}$ are spelled out. In particular, the precise properties needed are outlined in 1.4 of \emph{loc. cit.} In this note we will not actually give a definition of the extended symmetry algebra acting on $\Omega^{ch}_{X}(\operatorname{log}D)$, and again we direct the curious reader to \cite{Boua}. Nonetheless, further in this text it will be seen that additional symmetry is acquired by the characters of the log Chiral de Rham in the case of a log CY pair $(X,D)$. This is, according to the main theorem of \cite{Boua}, a shadow of the richer structure afforded by the extended topological symmetry.  \end{remark}

\section{Character of cohomology} We assume in this section that $(X,D)$ is a log Calabi-Yau pair, and turn our attention to the space of (derived ) global sections, which as in the introduction we denote  $$\mathcal{H}_{X,D}=H^{*}(X,\Omega^{ch}_{X}(\operatorname{log}D)).$$ \begin{definition}We will call a module for $\mathbf{top}\{d\}$ \emph{finite type} if: \begin{itemize}\item $J_{0}$ and $L_{0}$ act semi-simply and for each pair of integers $(E,j)$ the subspace $V(E,j):=\big\{v\in V\,|\, L_{0}v=Ev,\,\, J_{0}v=jv\big\}$ is finite dimensional. \item $V(E,j)$ is non-zero only if $E\geq 0$ and for fixed $E$ there are only finitely many $j$ with $V(E,j)$ non-zero.\end{itemize}\end{definition}

\begin{definition} The character of a module $V$ for $\mathbf{top}\{d\}$ is defined as $$\operatorname{char}_{V}(q,y)=\operatorname{tr}_{V}\big(q^{L_{0}}(-y)^{J_{0}}\big),$$ provided the sum is well defined.\end{definition}

\begin{remark} We caution the reader that we have dispensed of the customary factor of $q^{-\frac{c}{24}}=q^{-\frac{d}{8}}$ as it will not concern us. \end{remark}

\begin{remark}If $V$ is finite type then we have $\operatorname{char}_{V}(q,y)\in\mathbf{Z}[y,y^{-1}][[q]]$. \end{remark}

\begin{definition} Let $\mathcal{E}$ be a vector bundle on $X$. Then we define the formal sum of bundles $$\operatorname{Ell}(\mathcal{E})(q,y)=\bigotimes_{n\geq 1}\wedge_{-yq^{n-1}}(\mathcal{E})\otimes\wedge_{-y^{-1}q^{n}}(\mathcal{E}^{*})\otimes\operatorname{Sym}_{q^{n}}(\mathcal{E})\otimes\operatorname{Sym}_{q^{n}}(\mathcal{E}^{*}).$$ \end{definition}

Inspecting the explicit form of the filtration of \emph{loc. cit.} we have the following simple lemma, which we state after recalling some algebra.

\begin{definition} If $V=\cup F^{i}V$ is a $\mathbf{Z}_{\geq 0}$- filtered sheaf, then the \emph{associated graded} sheaf is the object $$\operatorname{Gr}V:=\oplus_{i} F^{i}V/F^{i-1}V.$$ \end{definition}

\begin{lemma}\label{filt} The restriction to $\Omega^{ch}_{X}(\operatorname{log}D)$ of the filtration on $j_{*}\Omega^{ch}_{X^{*}}$ defined in \cite{MSV}, 3.27,  has the associated graded sheaf $\operatorname{Ell}(\Omega_{X}(\operatorname{log}D))(q,y)$ as a bigraded bundle.\end{lemma}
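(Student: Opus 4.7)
The plan is to transfer the MSV filtration of \cite{MSV}, 3.27, from $j_{*}\Omega^{ch}_{X^{*}}$ to the log subalgebra and then compute symbols formally locally on $\Delta^{d}$ with $D_{r}=\{\gamma^{1}\cdots\gamma^{r}=0\}$. Since that filtration is defined purely in terms of the vertex algebra structure, together with the conformal weight and fermion number gradings, it restricts automatically to any vertex subalgebra; we therefore obtain an induced filtration on $\Omega^{ch}_{X}(\operatorname{log}D)$, and the task reduces to identifying its associated graded as a bigraded bundle.

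For indices $i>r$, the standard generators $\gamma^{i},\beta_{i},c^{i},b_{i}$ have their usual symbols, taking values in $\mathcal{O}_{X}$, $T_{X}$, $\Omega^{1}_{X}$, $T_{X}$ (with the appropriate parities and conformal shifts). For $i\leq r$, the distinguished log generators from definition \ref{maindef} have symbols lying in the log-twisted bundles:
\begin{itemize}
\item $\gamma^{i}$ remains a coordinate in $\mathcal{O}_{X}$;
\item $\tfrac{\partial\gamma^{i}}{\gamma^{i}}$ has symbol $d\!\log\gamma^{i}\in\Omega^{1}_{X}(\operatorname{log}D)$, bosonic at conformal weight $1$;
\item $\tfrac{c^{i}}{\gamma^{i}}$ has symbol $d\!\log\gamma^{i}\in\Omega^{1}_{X}(\operatorname{log}D)$, fermionic at conformal weight $0$;
\item $\gamma^{i}\beta_{i}$ has symbol $\gamma^{i}\partial_{\gamma^{i}}\in T_{X}(-\operatorname{log}D)$, bosonic at conformal weight $1$;
\item $\gamma^{i}b_{i}$ has symbol $\gamma^{i}\partial_{\gamma^{i}}\in T_{X}(-\operatorname{log}D)$, fermionic at conformal weight $1$.
\end{itemize}
Since $\Omega^{1}_{X}(\operatorname{log}D)$ is locally free with basis $\{d\gamma^{i}\}_{i>r}\cup\{d\!\log\gamma^{i}\}_{i\leq r}$ and $T_{X}(-\operatorname{log}D)$ is dual to it, the symbols of these local generators together with their $\partial$-derivatives assemble exactly into the formula for $\operatorname{Ell}(\Omega_{X}(\operatorname{log}D))(q,y)$: the factor $\wedge_{-yq^{n-1}}(\Omega^{1}_{X}(\operatorname{log}D))$ arises from $\partial^{n-1}$ of the $c$-type log generators, $\wedge_{-y^{-1}q^{n}}(T_{X}(-\operatorname{log}D))$ from $\partial^{n-1}$ of the $b$-type log generators, and the two symmetric factors arise analogously from the bosonic generators. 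Independence of the chosen formal coordinates follows because both sides transform compatibly under the group $G_{d,r}$ of lemma \ref{coord}.

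The main obstacle is to verify that no spurious identifications occur at the level of the associated graded: the log generators satisfy nontrivial normal-ordering relations of the sort exploited in the proof of lemma \ref{locsym}, and one must check that each such relation produces only corrections of strictly lower filtration degree, so that the symbol algebra remains free of the expected rank in every $(L_{0},J_{0})$-bidegree. I expect the cleanest way to handle this is a graded rank count: on each conformal weight component $\Omega^{ch}_{X}(\operatorname{log}D)$ is a free $\mathcal{O}_{X}$-module whose rank is determined by the formal local model, and the resulting generating function matches that of $\operatorname{Ell}(\Omega_{X}(\operatorname{log}D))$ via an elementary bookkeeping entirely parallel to that of \cite{MSV}. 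Combined with the identification of symbols above, this pins down the associated graded as the claimed bigraded bundle.
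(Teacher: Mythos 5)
Your proposal is correct and follows essentially the route the paper intends: the paper states the lemma as following ``by inspecting the explicit form of the filtration'' of \cite{MSV}, 3.27, and your identification of the symbols of the log generators with a local frame of $\Omega^{1}_{X}(\operatorname{log}D)$ and its dual, together with $G_{d,r}$-equivariance, is exactly that inspection made explicit. The one point you flag as the ``main obstacle'' --- that normal-ordering relations only contribute terms of strictly lower filtration degree, so the symbol algebra is free of the expected rank --- is precisely the observation the paper records in the remark immediately following the lemma, so your treatment matches the paper's.
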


\begin{remark} We stress that the associated graded is \emph{quasi-coherent}, even though the sheaf we started with was not. This can be seen to be a consequence of the fact that the $0$-mode of a product of functions on $X$ is not the product of the $0$-modes, but is the product of the $0$-modes \emph{modulo annihilation terms}, which decrease filtration degree.\end{remark}

\begin{lemma} The module $\mathcal{H}_{X,D}$ is finite type if $X$ is proper.\end{lemma}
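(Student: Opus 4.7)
The plan is to reduce the statement to the standard finite-dimensionality of coherent sheaf cohomology on a proper variety via the filtration of Lemma \ref{filt}. First I would observe that $\Omega^{ch}_X(\operatorname{log}D)$ decomposes globally as a direct sum of $(L_0,J_0)$-eigensheaves
$$\Omega^{ch}_X(\operatorname{log}D)=\bigoplus_{E,j}\Omega^{ch}_X(\operatorname{log}D)(E,j),$$
a decomposition manifest in the formal local model and preserved by the coordinate-change group $G_{d,r}$ (which acts by vertex automorphisms and hence commutes with the modes of $L$ and $J$). Since $X$ is Noetherian, sheaf cohomology commutes with direct sums, so
$$\mathcal{H}_{X,D}=\bigoplus_{E,j}H^{*}\bigl(X,\Omega^{ch}_X(\operatorname{log}D)(E,j)\bigr),$$
and this already secures the semi-simplicity of $L_0$ and $J_0$ on $\mathcal{H}_{X,D}$.

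Next I would address the qualitative properties of the grading. Non-negativity $E\geq 0$ is inherited from the sheaf itself, where it is visible locally from the fact that all creation modes have positive conformal weight. For fixed $E$, boundedness of the range of $j$ comes from inspecting the generators: only the modes of the fermionic generators $c,b$ (and their logarithmic analogues) contribute to $J_0$, each such mode contributes $\pm 1$ to $j$ and at least $0$ to the conformal weight, and the number of fermionic modes one can fit into a state of total weight $E$ is bounded by a function of $E$ and $d$ by simple mode counting.

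For finite-dimensionality of each eigenspace, I would restrict the MSV filtration of Lemma \ref{filt} to the single bigraded piece $\Omega^{ch}_X(\operatorname{log}D)(E,j)$. The associated graded in bidegree $(E,j)$ is, by Lemma \ref{filt}, the coefficient of $q^{E}y^{j}$ in $\operatorname{Ell}(\Omega_X(\operatorname{log}D))(q,y)$; expanding the generating function, this coefficient is a \emph{finite} direct sum of coherent sheaves of the form $\bigotimes_{n}\wedge^{a_n}\Omega_X(\operatorname{log}D)\otimes\wedge^{b_n}T_X(-\operatorname{log}D)\otimes\operatorname{Sym}^{c_n}\Omega_X(\operatorname{log}D)\otimes\operatorname{Sym}^{d_n}T_X(-\operatorname{log}D)$, with only finitely many multi-indices contributing. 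Properness of $X$ then gives finite-dimensional cohomology for each summand, and walking up the finite filtration via the long exact sequences associated to $0\to F^{p-1}\to F^{p}\to F^{p}/F^{p-1}\to 0$ yields finite-dimensionality of $H^{*}(X,\Omega^{ch}_X(\operatorname{log}D)(E,j))$.

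The main obstacle I would spend care on is verifying that the MSV filtration restricts to a \emph{finite} filtration on each bigraded piece — equivalently, that only finitely many mode patterns of a given $(E,j)$ exist in the local model. This is essentially the same combinatorial input that already bounds the range of $j$ for fixed $E$, but it is the pivot on which the whole spectral-sequence-style argument turns, so I would want to write it down explicitly from the formulas defining $\Omega^{ch}_{\Delta^{d}}(\operatorname{log}D_{r})$ before invoking properness.
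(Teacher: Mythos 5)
Your argument is correct and follows essentially the same route as the paper, whose entire proof is the one-line observation that the lemma follows from Lemma \ref{filt} together with finiteness of coherent cohomology on proper varieties. Your write-up simply makes explicit the steps the paper leaves implicit (the $(L_0,J_0)$-eigensheaf decomposition, the mode-counting bounds, and the finiteness of the filtration in each bidegree), all of which check out.
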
\begin{proof} This now follows from lemma \ref{filt}, by finiteness of coherent cohomology on proper varieties.\end{proof}

\begin{definition}\label{autom} We denote by $\Theta$ the line bundle on $E_{q}:=\mathbf{C}^{*}/q^{\mathbf{Z}}$ which by definition has local sections on a $q^{\mathbf{Z}}$-invariant subset $U$ the holomorphic functions $\varphi:U\rightarrow\mathbf{C}$ such that $\varphi(qy)=(-y^{-1})\varphi(y)$. \end{definition}

\begin{remark} Note that the above definition is essentially a definition by descent along the $q^{\mathbf{Z}}$ torsor $\mathbf{C}^{*}\rightarrow E_{q}$. Indeed we have specified a $q^{\mathbf{Z}}$-equivariant structure on the trivial bundle on $\mathbf{C}^{*}$, which by construction gives us a bundle on $E_{q}$. In terms of the language of \emph{automorphy factors} this amounts to the choice of the unique cocycle $j\in H^{1}(q^{\mathbf{Z}},\mathcal{O}_{\operatorname{hol}}(\mathbf{C}^{*}))$ so that $j(q)=-y^{-1}$. \end{remark}

\begin{definition}We define $$\tilde{\vartheta}(q,y)=\prod_{j=1}^{\infty}(1-q^{j-1}y)(1-q^{j}y^{-1}),$$ $$\tilde{\vartheta}_{+}(q,y):=\prod_{j=1}^{\infty}(1-q^{j}y)(1-q^{j}y^{-1})=\frac{\tilde{\vartheta}(q,y)}{1-y},$$ $$G(q,y)=\frac{\tilde{\vartheta}(q,y)}{\tilde{\vartheta}_{+}(q,1)}.$$\end{definition}

The following lemma is very easily seen and so we do not supply a proof:

\begin{lemma}\label{simple} If $\epsilon_{i}$ are Chern roots of a bundle $\mathcal{E}$ then, writing $\operatorname{ch}$ for the Chern character, we have $$\operatorname{ch}(\operatorname{Ell}(\mathcal{E})(q,y))=\prod_{i=1}\frac{\tilde{\vartheta}(q,ye^{\epsilon_{i}})}{\tilde{\vartheta}_{+}(q,e^{\epsilon_{i}})}.$$\end{lemma}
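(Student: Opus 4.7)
The plan is to reduce to line bundles via the splitting principle and then match the resulting product with the stated ratio of theta-like functions.

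First I would invoke the splitting principle to assume $\mathcal{E}=\bigoplus_i L_i$ with $c_1(L_i)=\epsilon_i$. Since the Chern character is multiplicative for tensor products and additive for direct sums, and since all of $\wedge_t$, $\operatorname{Sym}_t$ turn direct sums into tensor products, the Chern character of $\operatorname{Ell}(\mathcal{E})(q,y)$ factors as a product over the indices $i$ and over $n\geq 1$. So it suffices to compute, for a single line bundle $L$ with $c_1(L)=\epsilon$, the Chern character of each of the four series $\wedge_{-yq^{n-1}}L$, $\wedge_{-y^{-1}q^n}L^{*}$, $\operatorname{Sym}_{q^n}L$, $\operatorname{Sym}_{q^n}L^{*}$, and then take the product over $n\geq 1$.

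Next I would apply the standard line-bundle identities $\operatorname{ch}(\wedge_t L)=1+te^{\epsilon}$ and $\operatorname{ch}(\operatorname{Sym}_t L)=(1-te^{\epsilon})^{-1}$, using $\operatorname{ch}(L^{*})=e^{-\epsilon}$. This yields, for each index $i$, a contribution
\[
\prod_{n\geq 1}\frac{(1-yq^{n-1}e^{\epsilon_i})(1-y^{-1}q^{n}e^{-\epsilon_i})}{(1-q^{n}e^{\epsilon_i})(1-q^{n}e^{-\epsilon_i})}.
\]

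Finally, I would match the numerator and denominator with the definitions of $\tilde\vartheta$ and $\tilde\vartheta_+$. Substituting $w=ye^{\epsilon_i}$ in the numerator gives $\prod_{n\geq 1}(1-q^{n-1}w)(1-q^{n}w^{-1})=\tilde\vartheta(q,ye^{\epsilon_i})$, and the denominator is by inspection $\tilde\vartheta_+(q,e^{\epsilon_i})$. Taking the product over $i$ finishes the proof.

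There is no real obstacle here; the only thing to be careful about is the asymmetric way the $q$-shifts enter the four factors (the $\wedge_{-y}$ series starts at $n-1$ while the others start at $n$), which is precisely what breaks the symmetry between $\tilde\vartheta$ and $\tilde\vartheta_+$ and produces the stated ratio rather than a symmetric product.
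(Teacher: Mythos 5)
Your proof is correct, and it is the standard argument the paper clearly has in mind: the paper explicitly omits the proof as ``very easily seen,'' so there is nothing to compare against beyond noting that the splitting principle, the identities $\operatorname{ch}(\wedge_t L)=1+te^{\epsilon}$ and $\operatorname{ch}(\operatorname{Sym}_t L)=(1-te^{\epsilon})^{-1}$, and the bookkeeping of the $q$-shifts are exactly what is needed. Your closing remark about the asymmetric $q$-shift in the $\wedge_{-yq^{n-1}}$ factor being the source of the $\tilde\vartheta/\tilde\vartheta_{+}$ asymmetry is accurate and is the only point where one could slip.
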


\begin{theorem}\label{main} Let $(X,D)$ be a log Calabi-Yau pair of dimension $d$, then we have the following\begin{itemize}\item The character of global sections is elliptic, $\operatorname{Ell}_{X,D}(q,y)\in\Gamma\big(E_{q},\Theta^{\otimes d}\big).$\item The specialisation at $q=0$ is the $\chi_{y}$-genus of $X^{*}$. \item The $y=1$ specialisation is the Euler characteristic of $X^{*}$. \end{itemize}\end{theorem}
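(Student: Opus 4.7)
The plan is to reduce all three claims to Hirzebruch--Riemann--Roch applied to the associated graded. By lemma \ref{filt} the filtration on $\Omega^{ch}_X(\operatorname{log}D)$ has associated graded isomorphic to $\operatorname{Ell}(\Omega_X(\operatorname{log}D))(q,y)$ as a bigraded bundle, and the bigraded Euler characteristic is insensitive to passage to the associated graded, so
$$\operatorname{Ell}_{X,D}(q,y)=\chi\big(X,\operatorname{Ell}(\Omega_X(\operatorname{log}D))(q,y)\big)=\int_X\prod_i\frac{\tilde\vartheta(q,ye^{\epsilon_i})}{\tilde\vartheta_{+}(q,e^{\epsilon_i})}\operatorname{Td}(X)$$
by HRR and lemma \ref{simple}, where $\epsilon_i$ denote the Chern roots of $\Omega_X(\operatorname{log}D)$.

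For the ellipticity, a direct reindexing of the infinite product defining $\tilde\vartheta$ yields $\tilde\vartheta(q,qy)=-y^{-1}\tilde\vartheta(q,y)$, hence $\tilde\vartheta(q,qye^{\epsilon_i})=-y^{-1}e^{-\epsilon_i}\tilde\vartheta(q,ye^{\epsilon_i})$. Since $\tilde\vartheta_{+}(q,e^{\epsilon_i})$ and $\operatorname{Td}(X)$ are independent of $y$, the substitution $y\mapsto qy$ multiplies the integrand by $(-y^{-1})^d e^{-\sum_i\epsilon_i}$. The log CY hypothesis $\mathcal{O}(D)\cong K_X^{-1}$ gives, via the local computation $\det\Omega_X(\operatorname{log}D)=K_X\otimes\mathcal{O}(D)$, that $\sum_i\epsilon_i=c_1(\Omega_X(\operatorname{log}D))=0$ in $H^\ast(X,\mathbf{C})$, whence $e^{-\sum_i\epsilon_i}=1$ and $\operatorname{Ell}_{X,D}(q,qy)=(-y)^{-d}\operatorname{Ell}_{X,D}(q,y)$, precisely the automorphy factor from definition \ref{autom} defining $\Theta^{\otimes d}$.

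For the specializations I would argue as follows. At $q=0$ only the $n=1$ factor $\wedge_{-y}\Omega_X(\operatorname{log}D)$ in the definition of $\operatorname{Ell}$ survives, so $\operatorname{Ell}_{X,D}(0,y)=\sum_p(-y)^p\chi(X,\Omega^p_X(\operatorname{log}D))$, which recovers the $\chi_y$-genus of $X^\ast$ via the Deligne formula recalled in the introduction. For $y=1$ I would instead invoke the topological structure: $Q_0$ is odd of $J_0$-degree $+1$ with $\{Q_0,G_0\}=L_0$, so on every $L_0$-eigenspace $V_n$ with $n>0$ the fiberwise complex $(V_n,Q_0)$ is acyclic and hence so are its derived global sections. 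An Euler--Poincar\'e argument in fermion number then shows $\operatorname{Ell}_{X,D}(q,1)$ is $q$-independent and equals the $(-1)^{J_0}$-supertrace on $H^\ast(X,\Omega_X(\operatorname{log}D))$, which by Deligne's identification of log de Rham cohomology with $H^\ast(X^\ast)$ is $\chi(X^\ast)$.

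The main technical obstacle is the ellipticity step: keeping precise track of the interplay between the $(-y^{-1})^d$ coming from the theta transformation and the Chern-class factor $e^{-\sum_i\epsilon_i}$, and using the log CY hypothesis with just the right strength --- namely, that triviality of $K_X(D)$ forces $c_1(\Omega_X(\operatorname{log}D))=0$ in $H^\ast(X,\mathbf{C})$, so that the formal power series $e^{-\sum_i \epsilon_i}$ really collapses to the constant $1$ at the level of cohomology. Once the HRR reduction is in place, the specializations are essentially bookkeeping.
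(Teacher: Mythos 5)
Your proof is correct, and the overall skeleton is the same as the paper's: reduce to the associated graded via Lemma \ref{filt}, apply Riemann--Roch, use the quasi-periodicity $\tilde{\vartheta}(q,qy)=(-y^{-1})\tilde{\vartheta}(q,y)$, and treat the two specialisations exactly as the paper does ($q=0$ picking out $\wedge_{-y}\Omega_{X}(\operatorname{log}D)$ and hence the $\chi_{y}$-genus via Deligne, $y=1$ being a supertrace that survives turning on $Q_{0}$ and collapsing to conformal weight $0$). Where you genuinely diverge is the computation of $\operatorname{ch}\big(\operatorname{Ell}(\Omega_{X}(\operatorname{log}D))(q,y)\big)$: the paper pushes the residue exact sequence $0\to\Omega_{X}\to\Omega_{X}(\operatorname{log}D)\to\oplus_{j}\mathcal{O}_{D_{j}}\to 0$ through the multiplicative class $\lambda_{\operatorname{ell}}$, obtaining a three-fold product (a factor $G(q,y)^{r}$, factors in the Chern roots $\alpha_{i}$ of $\Omega_{X}$, and factors in $\delta_{j}=c_{1}(\mathcal{O}(-D_{j}))$), and then verifies automorphy from the identity $\sum_{i}\alpha_{i}=\sum_{j}\delta_{j}$; you instead apply Lemma \ref{simple} directly to the rank-$d$ bundle $\Omega_{X}(\operatorname{log}D)$ and use $c_{1}(\Omega_{X}(\operatorname{log}D))=c_{1}(K_{X}\otimes\mathcal{O}(D))=0$. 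Since $c_{1}(\Omega_{X}(\operatorname{log}D))=\sum_{i}\alpha_{i}-\sum_{j}\delta_{j}$, the two identities are equivalent, but your route is shorter and avoids the bookkeeping with $G(q,y)^{r}$ and the $\delta_{j}$-terms; what the paper's decomposition buys in return is the explicit product formula that it reuses in the $\mathbf{P}^{d}$ example. Two minor points you should make explicit: the identification $\operatorname{Ell}_{X,D}(q,y)=\chi\big(X,\operatorname{Ell}(\Omega_{X}(\operatorname{log}D))(q,y)\big)$ needs $X$ proper and the finite-type lemma so that each $(L_{0},J_{0})$-bigraded piece is governed by a finite filtration (the paper is equally implicit here), and your $q=0$ limit literally yields $\sum_{p}(-y)^{p}\chi(X,\Omega^{p}_{X}(\operatorname{log}D))$, so the match with $\chi_{y}$ is up to the same $y\mapsto -y$ convention the paper itself glosses over.
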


\begin{proof} First we notice that there is a morphism $$\lambda_{\operatorname{ell}}:K(X)\rightarrow K(X)((y))[[q]]$$ such that $\lambda_{\operatorname{ell}}(a+b)=\lambda_{\operatorname{ell}}(a)\lambda_{\operatorname{ell}}(b)$ and such that if $[\mathcal{E}]$ is the class of a vector bundle then we have $$\lambda_{\operatorname{ell}}([\mathcal{E}])=\big[\operatorname{Ell}(\mathcal{E})(q,y)\big]\in K(X)[y,y^{-1}][[q]]\subset K(X)((y))[[q]].$$ This follows by observing that the operation $\lambda_{\operatorname{ell}}$ takes direct sums to tensor products and then formally extending it to virtual vector bundles by stipulating $$\lambda_{\operatorname{ell}}(-E)=\lambda_{\operatorname{ell}}(E)^{-1}.$$ That the inverse is well defined is a consequence of the simple observation that $$[\operatorname{Ell}(E)(q,y)]=1+\operatorname{order}(y,q)$$ in $K((y))[[q]]$, and any such element is necessarily invertible. 

We introduce Chern roots $-\alpha_{i}$ of $T_{X}$ and denote by $\delta_{i}$ the first Chern class of the bundles $\mathcal{O}(-D_{j})$, where $D_{j}$ are the irreducible components of $D$. 

We recall that there is a short exact sequence $$0\rightarrow\Omega_{X}\rightarrow\Omega_{X}(\operatorname{log}D)\rightarrow\oplus_{j}\mathcal{O}_{D_{j}}\rightarrow 0,$$ and we apply to this the morphism $\lambda_{\operatorname{ell}}$.

We claim now that the Chern character of $\operatorname{Ell}(\Omega_{X}(\operatorname{log}D))(q,y)$ is equal to $$G(q,y)^{r}\prod_{i=1}^{d}\frac{\tilde{\vartheta}(q,ye^{\alpha_{i}})}{\tilde{\vartheta}_{+}(q,e^{\alpha_{i}})}\prod_{j=1}^{r}\frac{\tilde{\vartheta}_{+}(q,e^{\delta_{j}})}{\tilde{\vartheta}(q,ye^{\delta_{j}})}.$$  Let us see this explicitly: first note we have $$[\Omega_{X}(\operatorname{log}D)]=[\Omega_{X}]+\sum_{i}[\mathcal{O}_{D_{i}}] \in K(X)$$ and so we can hit this with the morphism $\lambda_{\operatorname{ell}}$ to obtain the identity in $K(X)((y))[[q]]$:

$$\lambda_{\operatorname{ell}}([\Omega_{X}(\operatorname{log}D)])=\lambda_{\operatorname{ell}}([\Omega_{X}])\prod_{i}\lambda_{\operatorname{ell}}([\mathcal{O}_{D_{i}}]).$$ We identify now the Chern characters of the individual terms in the product, noting that the Chern character is a morphism of algebras. Firstly, by \ref{simple} we have $$\operatorname{ch}(\lambda_{\operatorname{ell}})([\Omega_{X}])=\prod_{i=1}^{d}\frac{\tilde{\vartheta}(q,ye^{\alpha_{i}})}{\tilde{\vartheta}_{+}(q,e^{\alpha_{i}})}.$$ Now we identify the terms coming from the factors with $\mathcal{O}_{D_{i}}$. We have $$[O_{D_{i}}]=1-[\mathcal{O}_{X}(-D_{i})]\in K(X),$$ where $\mathcal{O}_{X}(-D_{i})$ is the line bundle of functions vanishing along $D_{i}$. Since $\lambda_{\operatorname{ell}}$ takes sums to products we have $$\lambda_{\operatorname{ell}}([\mathcal{O}_{D_{i}}])=\lambda_{\operatorname{ell}}(1)\lambda_{\operatorname{ell}}([\mathcal{O}_{X}(-D_{i})])^{-1}.$$ Now it is clear that we have $\operatorname{ch}\lambda_{\operatorname{ell}}(1)=G(q,y)$, indeed this is a special case of \ref{simple} with $\mathcal{E}=\mathcal{O}$. It remains only to note that, again by \ref{simple}, we have $$\operatorname{ch}\lambda_{\operatorname{ell}}([\mathcal{O}(-D_{i})])=\frac{\tilde{\vartheta}(q,ye^{\delta_{i}})}{\tilde{\vartheta}_{+}(q,e^{\delta_{i}})}.$$ Putting together the various terms in the product we have the desired expression for $\operatorname{ch}\lambda_{\operatorname{ell}}([\Omega_{X}(\operatorname{log}D)]).$

We now prove the claim about ellipticity of characters. Crucially we have the following equality: $$\sum_{i}\alpha_{i}=\sum_{j}\delta_{j}\in H^{2}(X).$$ This  follows from the fact that $D$ is anti-canonical, which implies that we have an equivalence of line bundles $$\mathcal{O}(-D)\simeq\bigotimes_{i}\mathcal{O}(-D_{i})\simeq \Omega_{X}^{d}.$$ Taking first Chern classes produces the desired equality. Recalling now the well-known identity $\tilde{\vartheta}(q,qy)=(-y^{-1})\tilde{\vartheta}(q,y)$ we note that upon substitution, $y\mapsto qy$, the term of $G(q,y)^{r}$  picks up a factor $(-y)^{-r}$, the term corresponding to $\alpha_{i}$ picks up a factor of $(-ye^{\alpha_{i}})^{-1}$ and the term corresponding to $\delta_{j}$ picks up a factor of $(-ye^{\delta_{j}})$. Multiplying this all together we find exactly $$(-y)^{-r}\prod_{i=1}^{d}(-ye^{\alpha_{i}})^{-1}\prod_{j=1}^{r}(-ye^{\delta_{j}})=(-y)^{-d}e^{(\sum_{j}\delta_{j}-\sum_{i}\alpha_{i})}=(-y)^{-d}$$ as claimed. 

That the $q=0$ specialisation is as claimed is the content of lemma 3.5.

That the $y=1$ specialisation is as claimed follows from lemma 3.6 and the fact that the $y=1$ specialisation is computed as an Euler characteristic, whence we can safely turn on the differential $Q_{0}$.

\end{proof}

\begin{example} We return to \ref{toric}, and sketch the proof of the formula stated above. Recall that we are computing $\operatorname{Ell}_{(\mathbf{P}^{d},D)}(q,y)$ where $D=\cup_{i} D_{i}$ the (anticanonical) toric boundary divisor. We let $X_{i}$ denote homogeneous coordinates on $\mathbf{P}^{d}$ and we note that $\frac{dX_{i}}{X_{i}}$ are global sections of $\Omega_{\mathbf{P}^{d}}(\operatorname{log}D)$. To see this it suffices to note that they are $\mathbf{C}^{*}$ equivariant sections of the corresponding complex of log forms on $\mathbf{C}^{d+1}\setminus \{0\}$. The Euler sequence implies that their sum is zero and thus an exact sequence $$0\rightarrow\mathcal{O}\rightarrow\mathcal{O}^{d+1}\rightarrow\Omega_{\mathbf{P}^{d}}(\operatorname{log}D)\rightarrow 0.$$ We deduce then that we have $$[\Omega_{\mathbf{P}^{d}}(\operatorname{log}D)]=d\in K(\mathbf{P}^{d})((y))[[q]].$$  We now hit this with $\lambda_{\operatorname{ell}}$ and use $\lambda_{\operatorname{ell}}(1)=G(q,y)$. Now the $K$-theoretic integral $$\chi:K(\mathbf{P}^{d})\rightarrow\mathbf{Z}$$ satisfies $\chi(1)=1$ and we are done.\end{example}

\section{Spaces of log Jets} \subsection{Jets and commutative vertex algebras} In this section we comment on the geometry behind our constructions. The results presented are simple and some proofs are only sketched, but we feel that the underlying geometry is clarifying.

Recall, for example from \cite{KapVass} section 2, that to a scheme $Y$ there is associated an infinite dimensional scheme parameterising maps $\Delta\rightarrow Y$, referred to as the \emph{jet space} of $Y$ and denoted $JY$. Further, the infinitesimal action of the vector field $\partial$ on $\Delta$ induces a global vector field on $Y$, which we also denote $\partial$. If $Y=\operatorname{spec}A$ is affine then so is $JY$, with algebra of functions denoted $JA$. We can present $JA$ as the algebra generated by symbols $a(n)$, for $a\in A$ and $n\geq 0$, subject only to the relations $$(ab)(n)=\sum_{i+j=n} a(i)b(j).$$ $\partial$ is then described by $$\partial a(n)=(n+1)a(n+1).$$ Recall that a \emph{commutative} vertex algebra is a vertex algebra $V$ such that $v_{(i)}w=0$ for all $v,w\in V$ and $i\geq 0$. Then it is a theorem of Borcherds that a commutative vertex algebra is equivalent to the data of pair consisting of a commutative algebra and a derivation. As such $JA$ is naturally a commutative vertex algebra and it is clear from the above presentation that is freely generated by $A$ as such, see 2.3.2. of \cite{Bei} for a thorough discussion. \begin{remark}We will refer to $JY$ as a commutative vertex scheme, with the meaning self-evident. It is the commutative vertex scheme freely generated by $Y$.\end{remark}

\begin{example} If $Y=\mathbf{A}^{1}$ with coordinate $x$ then $JY$ is isomorphic to an infinite dimensional affine scheme with coordinates $x_{i}$, interpreted as $i$-jets of a map $\Delta\rightarrow Y$. As such, $\mathcal{O}(JY)$ is the free commutative vertex algebra generated by an element $x$, and we have $x_{i}=\frac{\partial^{i}x}{i!}$. \end{example}

\subsection{Log commutative vertex algebras} Our goal in this subsection is to give a geometric interpretation of a certain \emph{Lagrangian} subalgebra of $\Omega^{ch}_{X}(\operatorname{log}D)$. We use the term \emph{Lagrangian} to refer to a maximal commutative subalgebra. The analogue in the non-logarithmic situation is the subalgebra generated locally generated by the fields $\gamma^{i}$ and $c^{i}$, which globalises to the commutative vertex algebra of differential forms on $JX$, according to the transformation formulae of \cite{MSV}. We will denote the resulting sheaf of commutative vertex algebras $\mathcal{V}^{c\gamma}_{X}:=\Omega(JX)$.

\begin{definition} We write $\mathcal{V}^{c\gamma}_{X,D}$ for the commutative subalgebra of $\Omega^{ch}_{X}(\operatorname{log}D)$ formed by intersecting $\Omega^{ch}_{X}(\operatorname{log}D)$ with $j_{*}\mathcal{V}_{X^{*}}$, and call it the log $c\gamma$ system.  \end{definition}

Inside $\mathcal{V}^{c\gamma}_{X,D}$ we have a sheaf of subalgebras defined using only $\gamma$ fields.\begin{definition} The resulting scheme over $X$ is denoted $J_{\operatorname{log}D}(X)$, and referred to as the space of \emph{log jets}.\end{definition}

 Inside this there is a natural divisor, $\widetilde{D}$, which is just the pull-back of $D$. 

\label{divinv}\begin{lemma} The natural global vector field on $J_{\operatorname{log}D}(X)$ preserves $\widetilde{D}$.\end{lemma}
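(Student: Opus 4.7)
The plan is to reduce to the formal local model and verify directly that $\partial$ preserves the defining ideal of $\widetilde{D}$.

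First I would localize on $X$ and pass to a formal coordinate chart $\gamma^{1},\ldots,\gamma^{d}$ in which $D$ is cut out by $\gamma^{1}\cdots\gamma^{r}=0$. The pulled-back divisor $\widetilde{D}$ is then cut out, as a subscheme of $J_{\operatorname{log}D}(X)$, by the same equation pulled up via the tautological projection to $X$, i.e.\ by the conformal weight $0$ element $\gamma^{1}\cdots\gamma^{r}\in\mathcal{O}(J_{\operatorname{log}D}(X))$. The next task is to identify the algebra $\mathcal{O}(J_{\operatorname{log}D}(X))$ in these coordinates: unpacking the definition of the log $c\gamma$ system (and keeping only the $\gamma$-generators from Definition of $\Omega^{ch}_{\Delta^{d}}(\operatorname{log}D_{r})$), it is generated as a commutative vertex algebra by $\gamma^{j}$ for $j>r$, together with $\gamma^{i}$ and $\omega^{i}:=\dfrac{\partial \gamma^{i}}{\gamma^{i}}$ for $i\leq r$.

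The crucial observation is then that the identity
\[
\partial \gamma^{i} \;=\; \gamma^{i}\,\omega^{i}
\]
holds inside $\mathcal{O}(J_{\operatorname{log}D}(X))$ for each $i\leq r$: it holds in $j_{*}\mathcal{V}^{c\gamma}_{X^{*}}$ by definition of $\omega^{i}$, and both sides lie in the logarithmic subalgebra. Applying the Leibniz rule for the derivation $\partial$ on the commutative vertex algebra $\mathcal{O}(J_{\operatorname{log}D}(X))$ then gives
\[
\partial(\gamma^{1}\cdots\gamma^{r}) \;=\; \Bigl(\sum_{i=1}^{r}\omega^{i}\Bigr)\,\gamma^{1}\cdots\gamma^{r},
\]
so $\partial$ sends the defining function of $\widetilde{D}$ into its own ideal. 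Equivalently, the global vector field on $J_{\operatorname{log}D}(X)$ corresponding to $\partial$ is tangent to $\widetilde{D}$.

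There is no real obstacle: the content of the lemma is packaged in the fact that $\omega^{i}$ already belongs to $\mathcal{O}(J_{\operatorname{log}D}(X))$ by construction, which is precisely the reason for including $\frac{\partial\gamma^{i}}{\gamma^{i}}$ among the logarithmic generators. The only thing to double-check is that the above identities, verified in one formal coordinate chart, glue to give an intrinsic statement on $J_{\operatorname{log}D}(X)$; this follows from the coordinate-invariance of the log subsheaf established in Lemma \ref{coord}.
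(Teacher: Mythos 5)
Your proof is correct and follows essentially the same route as the paper: the paper's argument is exactly the local observation that $\partial\gamma^{i}=\gamma^{i}\bigl(\frac{\partial\gamma^{i}}{\gamma^{i}}\bigr)$ with the rightmost factor regular on $J_{\operatorname{log}D}(X)$ by definition. You merely spell out the Leibniz-rule step for the product $\gamma^{1}\cdots\gamma^{r}$ and the coordinate-invariance of the localization, both of which the paper leaves implicit.
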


\begin{proof} This is a local condition, and locally amounts to the fact that $$\partial\gamma^{i}=\gamma^{i}\big(\frac{\partial\gamma^{i}}{\gamma^{i}}\big),$$  where the rightmost term is a regular function on $J_{\operatorname{log}D}(X)$ by definition. \end{proof}

Recall that we call a derivation, $\partial$, \emph{logarithmic} (with respect to $D$) if $\partial$ preserves the ideal sheaf defining $D$. 

Our goal is to describe this commutative vertex algebra in a manner analogous to the aforementioned description of $\mathcal{V}^{c\gamma}_{X}$. We will see that it can be described as the space of \emph{log jets} mentioned in the introduction, the definition of which we recall below.

\begin{definition} A log commutative vertex scheme (\emph{lcvs}) is a tuple $(X,D,\partial)$, with $(X,D)$ a log pair and $\partial$ a vector field on $X$ tangent to the divisor $D$. \end{definition}

By lemma \ref{divinv} we have a lcvs $(J_{\operatorname{log}D}(X),\widetilde{D},\partial)$.

\begin{lemma} $(J_{\operatorname{log}D}(X),\widetilde{D},\partial)$ is the free lcvs generated by $(X,D)$, that is to say the right adjoint to the forgetful functor $(Y,D_{Y},\partial_{Y})\mapsto (Y,D_{Y})$ is given by log jets. \end{lemma}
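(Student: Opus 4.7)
The plan is to verify the stated adjunction by constructing a natural bijection between morphisms of log pairs $\phi: (Y, D_Y) \to (X, D)$ and morphisms of lcvs $\widetilde\phi: (Y, D_Y, \partial_Y) \to (J_{\operatorname{log}D}(X), \widetilde{D}, \partial)$ compatible with the projection $J_{\operatorname{log}D}(X) \to X$. Since the claim is Zariski-local on both sides, I would first reduce to the model case $X = \Delta^d$ with coordinates $\gamma^1,\ldots,\gamma^d$ and $D = \{\gamma^1\cdots\gamma^r = 0\}$. In this chart, reading off the local generators of the $c\gamma$-system from the preceding subsection and retaining only the purely bosonic coordinate-type fields, one sees that $\mathcal{O}(J_{\operatorname{log}D}(X))$ is the commutative algebra with derivation freely generated by symbols $\gamma^i$ ($1\leq i\leq d$) together with additional symbols $\ell^i$ ($1\leq i\leq r$) representing $\partial\gamma^i/\gamma^i$, subject only to the relations $\partial\gamma^i = \gamma^i\ell^i$ for $i\leq r$.

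Next, given a lcvs $(Y, D_Y, \partial_Y)$ and a morphism of log pairs $\phi$, I would define the candidate lift $\widetilde\phi$ on these generators by $\widetilde\phi^*\gamma^i := \phi^*\gamma^i$ for all $i$ and $\widetilde\phi^*\ell^i := \partial_Y(\phi^*\gamma^i)/\phi^*\gamma^i$ for $i\leq r$, extending to the rest of $\mathcal{O}(J_{\operatorname{log}D}(X))$ by the requirement that $\widetilde\phi^*$ intertwines $\partial$ with $\partial_Y$. Checking that this prescription gives a well-defined morphism of commutative algebras with derivation and that it sends $\widetilde{D}$ into $D_Y$ is a routine verification using the Leibniz rule together with the single relation $\partial\gamma^i = \gamma^i\ell^i$ in the presentation above. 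Uniqueness of the lift is immediate from the same presentation, since the values of $\widetilde\phi^*$ on the $\gamma^i$ are fixed by $\phi$ and the values on the $\ell^i$ are then forced by the relation after generically inverting $\phi^*\gamma^i$; conversely, composing any morphism of lcvs into $J_{\operatorname{log}D}(X)$ with the projection recovers a morphism of log pairs, and these two constructions are visibly mutually inverse.

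The step I expect to be the main obstacle, and in fact the only nontrivial content of the lemma, is the regularity of $\partial_Y(\phi^*\gamma^i)/\phi^*\gamma^i$ as a section of $\mathcal{O}_Y$ rather than merely of $\mathcal{O}_Y[(\phi^*\gamma^i)^{-1}]$. This is exactly what the log hypotheses are set up to afford: interpreting $\partial_Y$ as a derivation preserving the ideal of each irreducible component of $D_Y$ and $\phi$ as a morphism for which $\phi^*\gamma^i$ factors locally on $Y$ as a unit times a monomial in local defining equations $f_j$ of components of $D_Y$, the Leibniz rule expresses $\partial_Y(\phi^*\gamma^i)/\phi^*\gamma^i$ as a sum of terms of the form $\partial_Y u/u$ with $u$ a unit and $n_{ij}\,\partial_Y f_j/f_j$ with $f_j$ a component defining equation, each of which is regular on $Y$. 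Once this regularity is granted, the adjunction is formal.
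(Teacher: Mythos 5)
Your argument is correct and is essentially the paper's proof written out in full: the paper's one-line justification is precisely that the image of $\frac{\partial\gamma^{i}}{\gamma^{i}}$ is well defined because $f$ preserves the log structures and $\partial_{Y}$ is tangent to $D_{Y}$, which is exactly the regularity claim you isolate and verify via the unit-times-monomial factorisation and the Leibniz rule. The only cosmetic difference is that you route the existence of the lift through an asserted free presentation of $\mathcal{O}(J_{\operatorname{log}D}(X))$, which is not strictly needed (one can instead restrict the localized map $\mathcal{O}(JX)[\prod(\gamma^{i})^{-1}]\to\mathcal{O}(Y)[\prod(\phi^{*}\gamma^{i})^{-1}]$ to the subalgebra, so that all relations are automatically respected), but this does not affect correctness.
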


\begin{proof} Given a lcvs $(Y,D_{Y},\partial_{Y})$ and a map $f:(Y,D_{Y})\rightarrow(X,D)$ we must show that it extends uniquely to a morphism of lcvs $$(Y,D_{Y},\partial_{Y})\rightarrow(J_{\operatorname{log}D}(X),\widetilde{D},\partial).$$ Locally we extend this map uniquely by compatibility with derivations, noting that the image of $\frac{\partial\gamma^{i}}{\gamma^{i}}$ is well defined as $f$ preserves the log structures and $\partial_{Y}$ is tangent to $D_{Y}$. \end{proof}

\begin{theorem} There is isomorphism of commutative vertex algebras $$\mathcal{V}^{c\gamma}_{X,D}\cong\Omega_{\operatorname{log}\widetilde{D}}(J_{\operatorname{log}D}(X)).$$\end{theorem}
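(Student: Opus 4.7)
The plan is to verify the stated isomorphism in a formal local chart and then globalize by naturality. I fix $X=\Delta^{d}$ with coordinates $\gamma^{1},\ldots,\gamma^{d}$ and $D=\{\gamma^{1}\cdots\gamma^{r}=0\}$, so both sides become explicit sheaves to be compared.

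First I would identify the underlying commutative (bosonic) parts. By construction $\mathcal{O}(J_{\operatorname{log}D}X)$ is the $\gamma$-subalgebra of $\mathcal{V}^{c\gamma}_{X,D}$, locally generated as a commutative vertex algebra by $\gamma^{j}$ for $j>r$ and by $\gamma^{i},\ \tfrac{\partial\gamma^{i}}{\gamma^{i}}$ for $i\leq r$; this part of the identification is tautological. The sheaf $\Omega^{1}_{\operatorname{log}\widetilde{D}}(J_{\operatorname{log}D}X)$ is then the $\mathcal{O}(J_{\operatorname{log}D}X)$-module generated by $d\gamma^{j}$ for $j>r$, by $d\log\gamma^{i}$ for $i\leq r$, and their iterated $\partial$-images.

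Next I would construct the proposed map by sending $d\gamma^{j}\mapsto c^{j}$ and $d\log\gamma^{i}\mapsto \tfrac{c^{i}}{\gamma^{i}}$, extending $\mathcal{O}(J_{\operatorname{log}D}X)$-linearly and then multiplicatively to the exterior algebra. The nontrivial verification is compatibility with the translation $\partial$. On the non-log generators this is exactly the isomorphism $\mathcal{V}^{c\gamma}_{X^{*}}\simeq \Omega(JX^{*})$ away from $D$. On the log generators it reduces to the vertex-algebra identity
\[
\partial\!\left(\tfrac{c^{i}}{\gamma^{i}}\right)=d\!\left(\tfrac{\partial\gamma^{i}}{\gamma^{i}}\right),
\]
a direct calculation using that $\gamma^{i},c^{i}$ are mutually local with no singular OPE, so that their normally ordered products are honest commutative products and $\partial$ is a derivation. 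Third, I would confirm bijectivity: as a supercommutative algebra each side is freely generated over $\mathcal{O}(J_{\operatorname{log}D}X)$ by the respective set of odd generators (and $\partial$-iterates), which correspond under the map, and since the $c$-type generators anticommute and square to zero no further relations arise.

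Finally, to globalize I would invoke functoriality of both sides in the pair $(X,D)$: the target by the universal property of $J_{\operatorname{log}D}X$ established in the preceding lemma, the source by lemma~\ref{coord}. The local assignment above is visibly $G_{d,r}$-equivariant (one reads it off the defining generators of $\Omega^{ch}_{X}(\operatorname{log}D)$), so it glues to a global isomorphism of sheaves of commutative vertex algebras. The principal obstacle is the intertwining of $\partial$ on the log generators; once that single vertex-algebra identity is checked, both the isomorphism statement and its globalization follow formally from the free/universal characterizations of the two sides.
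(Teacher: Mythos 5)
Your proposal is correct and takes essentially the same route as the paper, whose entire proof is the remark that there is essentially nothing more to be done beyond observing that log forms along $\widetilde{D}$ are needed to account for the generators $\frac{c^{i}}{\gamma^{i}}$. You have simply written out the tautological local identification, the single nontrivial check that $\partial$ intertwines on the log generators (valid because $\gamma^{i}$ and $c^{i}$ have trivial OPE, so $\partial$ acts as a derivation on honest products), and the $G_{d,r}$-equivariant gluing that the paper treats as immediate.
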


\begin{proof} There is essentially nothing more to be done, we note only that we need to allow log forms along $\widetilde{D}$ to account for the local generating fields $\frac{c^{i}}{\gamma^{i}}\in\mathcal{V}^{c\gamma}_{X,D}$.\end{proof}

\subsection{Miscellaneous remarks regarding the geometry log jet space}

We include as a remark some additional facts about the space of log jets into pairs, with proofs omitted.

\begin{remark}\begin{itemize}We let $f_{i}$ be some rational functions locally cutting out the components $D_{i}$ and we let $\pi$ denote the natural map from the space of log jets to the space of jets. \item Points of the space $J_{\operatorname{log}D}(X)$ are identified with data $$\{\varphi:\Delta\rightarrow X,\,\, \{\psi_{i}\}_{i=1}^{r}\in\Omega^{1}_{\Delta}\,\operatorname{with}\,d\varphi^{*}(f_{i})=\psi_{i}\varphi^{*}(f_{i})\}.$$
\item The map $\pi$ is an isomorphism on $X^{*}$.
\item If $\varphi(0)\in D$, then $\varphi$ lifts under $\pi$ only if $\varphi$ factors through $D$.
\item A lift of a jet factoring through the depth $l$ stratum of the stratification induced by $D$ is equivalent to a relative jet into the normal bundle to this stratum.
\item Given a curve $\Sigma$, there are multipoint versions of $J_{\operatorname{log}D}(X)$ living over the Ran space $\operatorname{Ran}(\Sigma)$, as in \cite{KapVass} in the non logarithmic case. \end{itemize}\end{remark}

\section{Declarations}\subsection{Funding, Associated Data and Conflicts of Interest} The author received no funding for this work. The author has no conflicts of interest to report. This manuscript has no associated data. This is the accepted version of the same named paper appearing in \emph{Annales Henri Poincar\'e,} with DOI https://doi.org/10.1007/s00023-025-01627-2.

\end{document}